\newtheorem{theorem}{Theorem}[section]
\newtheorem{corollary}[theorem]{Corollary}
\newtheorem{assumption}[theorem]{Assumption}
\newtheorem{lemma}[theorem]{Lemma}
\newtheorem{proposition}[theorem]{Proposition}
\newtheoremstyle{definition}{}{}{}{}{\bfseries}{}{ }{}
\theoremstyle{definition}
\newtheorem{definition}[theorem]{Definition}
\theoremstyle{remark}
\newtheorem{remark}{Remark}
\def\cdf{\F}
\def\estcdf{\hF}
\def\dens{f_\varepsilon}
\def\resk{\hat{\varepsilon}_\mathbf{k}}
\def\resestk{\hat{\varepsilon}_\mathbf{k}}
\def\k{\mathbf{k}}
\def\K{\mathbf{K}}
\def\funk{g}
\def\est{\hat{g}}
\def\R{\mathcal{R}}
\def\desk{z_\mathbf{k}}
\def\weight{w_\mathbf{k}}
\def\1{\mathbf{1}}
\def\F{\mathbb{F}}
\def\hF{\hat{\mathbb{F}}_n}
\def\RR{\mathbb{R}}
\def\lb{\left(}
\def\rb{\right)}
\def\la{\left|}
\def\ra{\right|}
\def\lbr{\left[}
\def\rbr{\right]}
\def\lbp{\left<}
\def\rbp{\right>}
\def\GS{B_{(k_1, k_2)}}
\def\GSk{B_\mathbf{k}}
\def\weightk{w_\mathbf{k}}
\def\weight{w_{(k_1,k_2)}}
\def\weight{w_{\mathbf{k}}}
\def\rp{R(l,m)}
\def\rpN{\hat{R}(l,m)}
\def\smoothindex{\tau}
 \def\smoothp{\tilde{\tau}}
 \def\smoothpp{\tilde{t}}
\def\trunc{d_n} %truncation parameter for proof
\def\truncres{\varepsilon_\k^{d_n}} %truncated residual for proof
\def\smoothoc{\mathfrak{O}(v)}
\def\smoothel1{\mathcal{E}(v)}
\def\smoothel2{\mathfrak{O}(\tau,1,1)}
\numberwithin{equation}{section}
\begin{document}

\title[The empirical process of residuals from an inverse regression]
{The empirical process of residuals from an inverse regression}

% Former title.
%\title[The empirical residual process in an inverse regression problem]
%{The empirical residual process in an inverse regression problem}

% Working title
%Content of abstract at interim state

\begin{abstract}
In this paper we investigate an indirect regression model
characterized by the Radon transformation. This model is useful for
recovery of medical images obtained by computed tomography scans. The
indirect regression function is estimated using a series estimator
motivated by a spectral cut-off technique. Further, we investigate the
empirical process of residuals from this regression, and show that it
satsifies a functional central limit theorem.
\end{abstract}

\author[ T.\ Kutta,  N. \ Bissantz, J.\ Chown and H.\ Dette]
{Tim Kutta,  Nicolai Bissantz, Justin Chown and Holger Dette}

\maketitle

\noindent {\em Keywords:}
    % Alphabetical order.
 Indirect regression model, inverse problems, Radon transform, empirical process
 \noindent {\em AMS Subject Classification:}  62G08, 62G30, 15A29
\bigskip

    %%%%% Section: Introduction

\section{Introduction}
\label{intro}
Computed tomography (CT) is a noninvasive imaging technique, which is
a key method for medical diagnoses.
%Due to its high rapidity and
%diagnostic conclusiveness (wir glauben ja alles, was die uns sagen ...:-)
%it has spread into diverse branches of medical applications
%e.g. oncology, dentistry and emergency surgery.
CT is based on measuring the intensity losses of X-rays sent through
a body. From these measurements an attenuation profile can be
recovered that provides an image of the body's (unobservable) interior.
The X-rays are linear and so the scanner rotates
to create a two-dimensional slice. Insight into three-dimensional
structures is obtained by considering multiple slices.
% The unit circle is used to model a detector ring surrounding the
% body, along which in practice the X-ray source rotates.
Our investigation is limited to a statistical analysis of data
gathered from a single slice. For this purpose we introduce the
inverse regression model
\begin{equation}
     Y_{\mathbf{k}}= \R    \funk (\desk)+\varepsilon_\mathbf{k},
 \quad \k \in \K ,
     \label{regressionproblem}
\end{equation}
where $(\varepsilon_\mathbf{k})_{{}\k \in \K}$ are independent
and identically distributed random variables with $ \mathbb{E} [
\varepsilon_\mathbf{k}]=0$.
Here $\K$ is a given index set, with each index $\k$ corresponding to
an X-ray path and the design point $\desk$ characterizing this path % in the \chk{unit disc} 
with associated response $Y_{\mathbf{k}}$.
Consequently, $\desk$ can be written using
coordinates $0 \leq s \leq 1$ as the distance from the origin and $0
\leq \phi \leq 2\pi$ as the angle of inclination.
The body's (true) attenuation profile along the slice is represented
by $g$, a function supported on the unit disc.
$\mathcal{R}$ is a linear operator acting on $g$ and denotes the
normalized {\it Radon transform}, i.e.\ for $0\le s \le 1$ and $0 \le
\phi \le 2\pi$,
\begin{equation} \label{radontransform}
\R\funk(s, \phi):= \frac{1}{2}(1-s^2)^{-\frac{1}{2}}
 \int\limits_{-\sqrt{1-s^2}}^{\sqrt{1-s^2}}
 \funk \big(s \cos(\phi)-t\sin(\phi),
 s \sin(\phi) + t \cos(\phi) \big)\, dt.
\end{equation}
Details on the underlying physics and applications of CT can be
found in \cite{Bu2008}.

Image reconstruction in CT is a particular case of the broad class of
linear inverse problems. An overview of the mathematical aspects of
these problems and methods to solving them can be found in the
monographs of \cite{natterer}, \cite{enghanneu1996} and
\cite{helgason2011}. Other examples of linear inverse problems are the
heat equation and convolution transforms (see \cite{mairruymgaart1996},
\cite{saitoh1997}, and \cite{cavalier2008}, among others). Additional
statistical inverse problems include errors-in-variables models and
the Berkson error model (see, for example, \cite{bishohmunruy2007},
\cite{CarDelHal2007}, \cite{KouSon2008, KouSon2009},
\cite{berbocdes2009}, \cite{kaisom2005}, \cite{delaigle2014}, and
\cite{Kato17}). The Radon transform is usually discussed in the
contexts of positron emission tomograpy (PET) and CT in medical
imaging. In the case of PET, lines-of-sight are observed along which
emissions have occurred. However, the positions of the emissions on
these lines are unknown. Here the aim is to reconstruct the emission
density (see \cite{johnstonesilbverman1990}, \cite{kortsy1993}, and
\cite{cavalier2000density}, among others). On the other hand, CT leads
to the inverse regression \eqref{regressionproblem} (see, for example,
\cite{cavalier1999Regression} and \cite{ker2010, ker2012}).

% Here, we observe the attenuation of a X-ray signal along specific
% lines-of-sight through the object of interest. This decrease of the
% X-ray intensity along several lines is observed and then used to
% reconstruct the mass density of the object of interest.

%In all these examples the first step in data analysis is the recovery
%of the quantity of interest which involves the approximate inversion
%of the corresponding operator. In most cases the relation between the
%observed  data and  the object to be  estimated is similar as in
%\eqref{regressionproblem} described via a linear operator, which is
%not well  posed, usually because of  a discontinuous inverse. The
%discontinuity of the inverse   operator (in our case
%$\mathcal{R}^{-1}$) results in  non reliable estimates if we  simply
%reconstruct $\mathcal{R}\funk$ from the data and apply
%$\mathcal{R}^{-1}$ to this estimate. Hence, additional regularization
%techniques are necessary. Early work in this context  for  CT-like
%data  can be found in  \cite{natterer, natterer1983}, where several
%regularization methods are reviewed.

We contribute to this discussion by deriving the rate of uniform,
strong consistency for a nonparametric estimator $\est$ of the unknown
function $g$ based on the popular spectral cutoff method. Further, we
derive a functional central limit theorem for the empirical process of
the resulting model residuals $\resk$, i.e.\ we investigate the
estimator
\begin{equation} \label{estimateddistributionfunction}
\hF(t) = \sum_{ \mathbf{k} \in \mathbf{K}} \weight \mathbbm{1}\left\{
    \resk \le t\right\},
 \quad t \in \RR,
\end{equation}
where the nonnegative weights $\weight$ sum to $1$ (see Section
\ref{sec3}). Statistical applications of results of this type include
validation of model assumptions. In the context of inverse regression
models, to the best of our knowledge only one result is available:
\cite{BiChDe2018}, who study an inverse regression model characterized
by a convolution transformation.

In direct regression problems, residual-based empirical
processes arising from non- and semiparametric regression estimators
have been considered by numerous authors (see \cite{AkV2013},
\cite{neumeyer2009}, \cite{mullerschickwefelmeyer2012},
\cite{Colling2016}, and \cite{Zhang2018}, among
others). \cite{DetteNeumeyerVanKeilegom2007} consider tests for a
parametric form of the variance function in a heteroscedastic
nonparametric regression by comparing the empirical distribution
function of standardized residuals calculated under a null model to
that of an alternative model. \cite{NeumeyerVanKeilegom2010} work with
a similar approach as the previous authors to propose tests for
verifying convenient forms of the regression
function. \cite{KhmaladzeKoul2009} introduce a popular distribution
free approach to addressing goodness-of-fit problems for the errors
from a nonparametric regression, where these authors introduce a
transformation of the empirical distribution function of residuals
that is useful for forming test statistics with convenient limit
distributions. All of these approaches to validating model assumptions
crucially rely on a technical asymptotic linearity property of the
residual-based empirical distribution function. We show the estimator
\eqref{estimateddistributionfunction} shares this property as well,
and the results of this article can be used immediately in approaches
to validating model assumptions in the inverse regression model
\eqref{regressionproblem} that are in the same spirit as the previously
mentioned works.

We have organized the remaining parts of the paper as follows. Model
\eqref{regressionproblem} is further discussed and we introduce the
estimator $\est$ in Section \ref{sec2}. Our main results are given in
Section \ref{sec3}. All of the proofs of our results and additional
supporting technical details may be found in the appendices.

\section{Estimation in the indirect regression model}
\label{sec2}

 In this section we give more details regarding the Radon transform model
 \eqref{regressionproblem} and introduce an estimator of the function $\funk$.

\subsection{The Radon transform}

 Following \cite{johnstonesilbverman1990} let
\begin{equation}
    \mathcal{B} := \{(r,\theta): 0 \le r \le 1, ~ 0 \le \theta \le 2 \pi \}
    \label{brainspace}
\end{equation}
 denote the unit disc, which is the two dimensional domain of the investigated
 attenuation profile $\funk$ and is called \textit{brain space} for historical reasons.
 It is equipped with the uniform distribution,  given in polar coordinates by
\begin{equation}
    d \mu (r, \theta) := \pi^{-1}r~ dr ~ d\theta. \label{mu}
\end{equation}
 This means that no prior emphasis on any region of the scanned area is given.
 The \textit{detector space} $\mathcal{D}$ is defined as
\begin{equation}
 \mathcal{D} := \{(s,\phi): 0 \le s \le 1, ~ 0 \le \phi \le 2 \pi \}
 \label{detectorspace}
\end{equation}
 with corresponding probability measure
\begin{equation}
    d\lambda(s, \phi):= 2 \pi^{-2} \sqrt{1-s^2} ~ds ~ d\phi. \label{lambda}
\end{equation}
The domain of the transformed image $\R \funk$ is $\mathcal{D}$, a parametrization of all lines (X-ray paths) crossing the unit disc. It is usally referred to as \textit{detector space}. 
 $\lambda$ is a probability measure on $\mathcal{D}$ adapted to the length of the line segments inside the disc.  
 For analytic simplicity we allow the angles in $\mathcal{B}$ and $\mathcal{D}$ to be
 exactly $0$ and $2\pi$. This is possible since the below required smoothness  of $\funk$ and
 $\mathcal{R}\funk$ entail periodicity with respect to the angular coordinates.  
 
 The Radon transform in \eqref{radontransform} defines a linear operator from $\mathcal{L}^2(\mathcal{B}, \mu) $ to
  $ \mathcal{L}^2(\mathcal{D}, \lambda)$.
 Identifying corresponding equivalence classes it can be shown that $\mathcal{R}$ is one-to-one, compact and permits a singular value
 decomposition (SVD). The SVD of $\R$ is vital for our subsequent investigations. To state it efficiently we introduce some definitions borrowed from 
 \cite{johnstonesilbverman1990}  and  \cite{BoWo1970}. Let
\begin{equation*}
    \mathcal{N}:=\big\{  (l,m): m \in \mathbb{N}_0, l=m, m-2,...,-m\big\}.
    \label{Nindexset}
\end{equation*}
 be and index set and define for  $(l,m) \in \mathcal{N}$ the function
\begin{equation}
    \varphi_{(l,m)} (r, \theta):= \sqrt{m+1}~ R_m^{|l|}(r)~
    \exp(il\theta), \label{varphi}
\end{equation}
 where
        \begin{equation*}
            R_m^{|l|}(r):= \sum_{j=0}^{\frac{1}{2}(m-|l|)} (-1)^j
            \frac{(m-j)!}{j! \big( \frac{m+|l|}{2}-j\big)! \big( \frac{m-|l|}{2}-j\big)!}
            r^{m-2j} \label{radialpol}
        \end{equation*}
 is the so called \textit{radial polynomial}. Finally for
 $(l, m) \in \mathcal{N}$ we define
\begin{equation}
    \psi_{(l,m)}(s, \phi):=U_m(s)\exp(il\phi),  \label{psi}
\end{equation}
 where $U_m$ denotes the $m$ths Chebyshev polynomial of the second kind. For convenience of notation we also define $\varphi_{(l,m)} \equiv 0$ and $\psi_{(l,m)}\equiv 0$ for
 $(l,m) \notin \mathcal{N}$.
 Both collections of  functions,
\begin{equation*}
    \{ \varphi_{(l,m)}: (l,m) \in \mathcal{N}\} ~~\textnormal{and}~~\{\psi_{(l,m)}:
    (l,m) \in \mathcal{N} \}
\end{equation*}
 form orthonormal bases of the spaces $\mathcal{L}^2(\mathcal{B}, \mu )$ and
 $\mathcal{L}^2(\mathcal{D}, \lambda)$ respectively. With these notations the
 SVD of $\mathcal{R}$ for some
 $\funk\in \mathcal{L}^2(\mathcal{B}, \mu )$ is given by
\begin{eqnarray}
    \mathcal{R} \funk(s, \phi) &  = & \sum_{m=0}^\infty
    \sum_{l=-m}^{ m} \frac{1}{\sqrt{m+1} }~ \psi_{(l,m)}(s, \phi) \left<\funk ,\varphi_{(l,m)}
    \right>_{\mathcal{L}^2(\mathcal{B}, \mu)}.\label{svdL2}
\end{eqnarray}
 In the literature the functions
 $\varphi_{(l,m)}(r, \theta)(m+1)^{-1/2}$
 are commonly referred to as \textit{Zernike polynomials}, which play an important role in the
 analysis of optical systems, for instance in the modelling of refraction errors, c.f.
 \cite{Ze1934} and more recently \cite{flela2011}. We refer to \cite{D1983} for more details on the
 cited SVD of the normalized Radon transform.
 Due to injectivity of the operator $\mathcal{R}$ we can immediately access its inverse $\mathcal{R}^{-1}$ pointwise defined
 for some $\R \funk \in \mathcal{R}(\mathcal{L}^2(\mathcal{B}, \mu))$, as
\begin{eqnarray}
    \funk=\mathcal{R}^{-1} \lbr \R \funk \rbr(r, \theta) &  = &
    \sum_{m=0}^\infty \sum_{l=-m}^{ m} \sqrt{m+1}~ \varphi_{(l,m)}(r, \theta)
    \left<\R\funk, \psi_{(l,m)} \right>_{\mathcal{L}^2(\mathcal{D}, \lambda)}.
    \label{inverseL2}
\end{eqnarray}
 %This inversion formula constitutes the foundation of the proposed cutoff estimator of the function $\funk$.
 The identities \eqref{svdL2}, \eqref{inverseL2} as well as $\mathcal{L}^2$-expansions in the respective spaces
 apply a priori almost everywhere. However if $g$ is sufficiently smooth they even hold uniformly. In order to specify the required regularity we define

% , but if we impose certain smoothness assumptions on $\funk$ they hold uniformly.
% To specify the regularity conditions we denote by
 \begin{equation}
    \smoothoc:=\Big  \{g \in \mathcal{L}^2(\mathcal{B}, \mu)\Big| \funk~~ \textit{continuous},
    \sum_{m=0}^\infty \sum_{l=-m}^m \Big| \lbp  \R \funk, \psi_{(l,m)}
    \rbp_{\mathcal{L}^2(\mathcal{D}, \lambda)} \Big| (m+1)^{v}< \infty, \Big\},
    \label{ellipsoidondition}
\end{equation}
the  smoothness class. We assume throughout this paper that
 the regression function $g$ in model \eqref{regressionproblem}  is an element  of $\smoothoc$  (for some $v \ge 1$).
  Controlling smoothness and thereby the complexity of the class of regression
 functions by related conditions is common in inverse problems. This is owed to their
 natural correspondence to  singular value decompositions of  operators and their
 suitability to prove minimax optimal rates (see for example
 \cite{mairruymgaart1996},  \cite{CavalierTsybakov2002}, \cite{BiHo2013} or \cite{Blanchard2018}).

\begin{proposition} \label{proposition1}
 Suppose that $\funk \in \smoothoc$ with $v \ge 1$, then the following four identities hold everywhere:
\begin{eqnarray}
    \funk  &=& \sum_{m=0}^\infty
    \sum_{l=-m}^m \varphi_{(l,m)} \lbp  \funk, \varphi_{(l,m)}
    \rbp_{\mathcal{L}^2(\mathcal{B}, \mu)}  \label{L2expansiong}    \\
     \R \funk &=& \sum_{m=0}^\infty
     \sum_{l=-m}^m \psi_{(l,m)} \left<
    \R \funk, \psi_{(l,m)} \right>_{\mathcal{L}^2(\mathcal{D}, \lambda)} \label{L2expansionRg} \\
    \mathcal{R} \funk&=&   \sum_{m=0}^\infty \sum_{l=-m}^{ m} \frac{1}{\sqrt{m+1}}
    ~ \psi_{(l,m)} \left<\funk, \varphi_{(l,m)} \right>_{\mathcal{L}^2(\mathcal{B},
    \mu)}.\label{svd} \\
    g&=& \mathcal{R}^{-1} \lbr \R \funk \rbr  =  \sum_{m=0}^\infty
    \sum_{l=-m}^{ m} \sqrt{m+1}~ \varphi_{(l,m)} \left<
    \R \funk, \psi_{(l,m)} \right>_{\mathcal{L}^2(\mathcal{D}, \lambda)}. \label{inverse}
\end{eqnarray}
 Moreover  the functions $g$ and $\R \funk$ are   $\lfloor (v-1)/2\rfloor$
 times continuously differentiable.
\end{proposition}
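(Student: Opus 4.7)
The plan is to treat all four identities as uniformly and absolutely convergent series expansions; the hypothesis $g\in\smoothoc$ with $v\geq 1$ is tailored to make each of them summable. The keystone is the $\mathcal{L}^2$ singular value decomposition \eqref{svdL2}: pairing it with an arbitrary $\psi_{(l,m)}$ and using orthonormality of $\{\psi_{(l,m)}\}$ in $\mathcal{L}^2(\mathcal{D},\lambda)$ yields the coefficient identity
\[
 \langle g,\varphi_{(l,m)}\rangle_{\mathcal{L}^2(\mathcal{B},\mu)}
 =\sqrt{m+1}\,\langle\R g,\psi_{(l,m)}\rangle_{\mathcal{L}^2(\mathcal{D},\lambda)},
\]
so the smoothness condition automatically controls the Zernike coefficients of $g$ as well.

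I would then use the elementary sup-norm bounds $|R_m^{|l|}(r)|\leq 1$ on $[0,1]$ and $|U_m(s)|\leq m+1$ on $[-1,1]$, which give $\|\varphi_{(l,m)}\|_\infty\leq \sqrt{m+1}$ and $\|\psi_{(l,m)}\|_\infty\leq m+1$. Substituting these into each of \eqref{L2expansiong}--\eqref{inverse} and invoking the coefficient identity, the absolute value of every summand is dominated by a constant multiple of $(m+1)|\langle\R g,\psi_{(l,m)}\rangle|$, and the assumption $v\geq 1$ makes $\sum_{(l,m)}(m+1)|\langle\R g,\psi_{(l,m)}\rangle|$ finite. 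Each series therefore converges absolutely and uniformly on its domain, so its limit is continuous. Since $\{\varphi_{(l,m)}\}$ and $\{\psi_{(l,m)}\}$ are orthonormal bases, these uniform limits equal $g$, respectively $\R g$, in $\mathcal{L}^2$, hence almost everywhere. The function $g$ is continuous by assumption, and $\R g$ is continuous because the substitution $t=u\sqrt{1-s^2}$ in \eqref{radontransform} removes the singular factor $(1-s^2)^{-1/2}$ and dominated convergence applies. Two continuous functions agreeing almost everywhere are equal everywhere, which upgrades the four identities from a.e.\ to pointwise.

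For the smoothness statement I would differentiate each series term by term and appeal to Markov's inequality: any polynomial $P$ of degree $m$ with $\|P\|_\infty\leq M$ on $[-1,1]$ satisfies $\|P^{(k)}\|_\infty\leq C_k m^{2k}M$. Combined with $|\partial_\phi^j e^{il\phi}|\leq |l|^j\leq m^j$, this yields total-derivative bounds $\|\partial^k\psi_{(l,m)}\|_\infty=O(m^{2k+1})$ and $\|\partial^k\varphi_{(l,m)}\|_\infty=O(m^{2k+1/2})$, the latter using that each $\varphi_{(l,m)}$ is in fact a polynomial in the Cartesian coordinates $(x,y)$ so that no coordinate singularity appears at the origin. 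Inserting these into the termwise derivatives of \eqref{L2expansionRg} and \eqref{inverse} and applying the coefficient identity once more, the differentiated series are bounded by $C\sum_{(l,m)}(m+1)^{2k+1}|\langle\R g,\psi_{(l,m)}\rangle|$, which is finite precisely when $2k+1\leq v$, i.e.\ $k\leq \lfloor(v-1)/2\rfloor$; uniform convergence then legitimises term-by-term differentiation and delivers continuous $k$-th derivatives. The main obstacle is this last step: one needs the sharp Markov bound $O(m^{2k})$ rather than the naive $m!/(m-k)!$, and must check that the $\sqrt{m+1}$ factors appearing when passing from $\psi$- to $\varphi$-expansions combine so that both $g$ and $\R g$ attain exactly the same smoothness threshold $\lfloor(v-1)/2\rfloor$.
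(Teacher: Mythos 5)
Your proposal is correct and follows essentially the same route as the paper: establish the coefficient identity $\langle g,\varphi_{(l,m)}\rangle=\sqrt{m+1}\,\langle\R g,\psi_{(l,m)}\rangle$ from the SVD, use the sup-norm bounds $\|\varphi_{(l,m)}\|_\infty\le\sqrt{m+1}$ and $\|\psi_{(l,m)}\|_\infty\le m+1$ to get absolute uniform convergence of each series under $v\ge 1$, conclude everywhere-equality from $\mathcal{L}^2$-equality plus continuity of both sides, and obtain differentiability by term-by-term differentiation with $O(m^{2k})$ derivative bounds yielding the threshold $2k+1\le v$. The only divergence is cosmetic: you invoke the Markov brothers' inequality for the polynomial derivative bounds, whereas the paper derives the same $m^{2k}$ bounds by induction on the explicit derivative recurrences for the radial and Chebyshev polynomials (its Propositions B.1 and B.2).
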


 The equality of $\funk$ and its $\mathcal{L}^2$-expansion is vital when
 proving uniform bounds on the distance between $\funk$ and $\est$. In one
 dimensional convolution type problems this is usually dealt with by the Dirichlet
 conditions  that directly apply to classes of smooth functions (see  \cite{nawaboppenheim1996} pp. 197-198).
% Results such as Proposition \ref{proposition1}, relating smoothness  to function classes of
% a certain geometric shape are typical in  linear inverse problems
% (see \cite{cavalier2008}.
 It should also be noted that the series condition on the function $\funk$ in
 \eqref{ellipsoidondition} implies regularity properties beyond mere smoothness.
 For instance, if $v\ge 2k+1$ it also entails periodicity of $\funk$ and its continuous
 derivatives in the angular component up to the order $k$. This property
 follows by periodicity of the basis functions in the angle and is an analogue to
 periodicity of convergent Fourier series on bounded intervals. Notice that it fits naturally to
 the scanning regime, since any function transformed from Cartesian into spherical
 coordinates will comply to periodicity with respect to the angle.

\subsection{Design} \label{sec22}

 As common in computed tomography we will assume a parallel scanning
 procedure, corresponding to a grid of design points on the detector space. Adopting our
 results to fan beam geometry, which underlies most modern scanners, is then
mathematically simple.

 We thus define a grid on the detector space $\mathcal{D}$, where for given  $p,q \in \mathbb{N}$
 each of the constituting
 rectangles has side length $1/q$ in $s$-direction and $2 \pi/p$ in $\phi$-direction.
 More formally, we define an index set
\begin{equation*}
    \mathbf{K}:=\{(k_1, k_2): 0 \le k_1 \le q-1, 0 \le k_2 \le p-1\}
\end{equation*}
 and decompose the detector space  in rectangular boxes of the  form
\begin{equation*}
    \GS:=\Big \{(s, \phi)\in \mathcal{D}:~ \frac{k_1}{q} \le s \le  \frac{k_1+1}{q},
    \frac{2 \pi k_2}{p} \le \phi \le \frac{2 \pi (k_2+1)}{p}\Big \}, \label{GSk}
\end{equation*}
 where $ \mathbf{k}=  (k_1, k_2) \in \K$. The design
 points $\{  z_{(k_1, k_2)}  ~|~(k_1, k_2) \in  \mathbf{K} \} $ are then defined as follows.
 The second coordinate of $z_{(k_1, k_2)}$ is given by
\begin{equation*}
    z_{k_2}^2:= 2 \pi \frac{k_2+\frac{1}{2}}{p}
\end{equation*}
 and  the first coordinate $z_{k_1}^1$  is determined as the solution of the equation
\begin{equation} \label{quadrature}
    \int_{k_1/q}^{(k_1+1)/q} (s-z_{k_1}^1) \sqrt{1-s^2}ds=0.
\end{equation}
 Throughout this paper we consider the inverse regression model  \eqref{regressionproblem} with these $n=pq$
 design points.
 The non-uniform design in radial direction defined by  \eqref{quadrature} is motivated by
 a midpoint
 rule to numerically  integrate over each box, with respect to the measure
 $\lambda$ in \eqref{lambda}.   For asymptotic considerations,
 we assume that $q \to \infty$ and that $p=p(q)\to \infty$ depends on $q$ as follows:

\begin{assumption} \label{assumption1}
 There exist constants $C_1$, $C_2>0$, such that $C_1 q \le p(q) \le C_2 q$ for
 all $q \in \mathbb{N}$.
\end{assumption}

 \noindent
 Denoting the number of rows and columns in the
 grid of design points by $q$ and $p$ respectively is common in the literature and
 numerical programming. Notice that our Assumption \ref{assumption1}  leaves room
 for the resolution optimal choice $2 \pi q \approx  p$ (see \cite{NaW2001},
  p. 74).
 Sometimes we will use the notation $n \to \infty$, actually meaning
 that according to Assumption \ref{assumption1} $q$ and thereby $p$ and $n$ diverge.  Note also
that  the   index set $\K$ depends on the sample size $n$
in model \eqref{regressionproblem}. Thus  formally we consider a triangular array of independent, identically distributed
and
centred  random variables $(\varepsilon_\k)_{\k \in \K}$, but we do not
reflect this dependence  on $n$  in our notation.

\subsection{The spectral cutoff estimator} \label{spec_cut_est}

 Motivated by the representation \eqref{inverse} we now define the cutoff estimator
 $\est$ for the function $\funk$ in model \eqref{regressionproblem} by
\begin{equation}
    \est (r, \theta)=  \sum_{m = 0}^{t_n} \sum_{l=-m}^{m} \sqrt{m+1}  ~
    \varphi_{(l,m)}(r, \theta) ~ \rpN. \label{estimator}
\end{equation}
 Here 
\begin{equation}
    \rpN:= \sum_{\mathbf{k} \in \mathbf{K}}  \weightk\overline{\psi_{(l,m)}}(\desk)~
    Y_{\mathbf{k}}\label{R_N}
\end{equation}
 is an estimator of the inner product
\begin{equation}
    \rp:=\left<
    \R \funk, \psi_{(l,m)} \right>_{\mathcal{L}^2(\mathcal{D}, \lambda)}
     \label{R}
\end{equation}
and   $\weightk:=\lambda(\GSk)$ denotes  the Lebesgue measure of the cell $\GSk$.
Comparing  \eqref{inverse} to our estimator in \eqref{estimator}, we observe that the inner products have been replaced by the estimates  \eqref{R_N}. Furthermore the series has been truncated at  $t_n \in \mathbb{N}$, which represents the application of a regularized inverse. 
% Note that  the inner products in the expansion \eqref{inverse} are replaced by the
 %estimates \eqref{R_N} and the parameter  $t_n \in \mathbb{N}$  is used for a truncation representing the
% application of a regularized inverse.
 In the literature it is common to refer to either $t_n$ or $t_n^{-1}$ as
 \textit{bandwidth}, since it is used to balance between bias and variance like the bandwidth in
 kernel density estimation (see \cite{cavalier2008}).

 The choice of a bandwidth is a
 non-trivial problem. An optimal bandwidth with respect to some criterion such
 as the integrated mean squared error will depend on the unknown regression
 function $\funk$. Several data driven selection criteria for the choice of $t_n$ have
 been proposed and examined in the literature. We refer to the monograph of \cite{Vo2002},
 where multiple techniques are gathered. More closely related to our case is the risk hull method
 by \cite{cavalier2006} in the white noise model and the smooth bootstrap examined
 by \cite{BiChDe2018} in a different context.

\begin{remark}
 It should be noticed that in practice a smooth dampening of high frequencies usually
 shows a better performance  than the strict spectral cutoff. We can accommodate this  by
 introducing a smooth version of the estimator $\est$ in \eqref{estimator}. For
 this purpose let $\Lambda: \mathbb{R} \to [0,1]$ denote a function with
 compact support and define
\begin{equation} \label{altest}
    \est_{\Lambda} (r, \theta)=  \sum_{m=0}^\infty ~ \Lambda(m t_n^{-1})\sum_{l=-m}^{m}
    \sqrt{m+1} \varphi_{(l,m)}(r, \theta) ~ \rpN,
\end{equation}
 as an alternative estimator of $\funk$. Note that the estimate $\est$ in
 \eqref{estimator} is obtained for $\Lambda(x)=\mathbbm{1}_{[0,1]}(x)$.
 All results presented in this paper  remain valid for the estimator \eqref{altest}.
 However, for sake of brevity and a transparent presentation the subsequent
 discussion is restricted to the spectral cutoff  estimator in \eqref{estimator}.
\end{remark}

\section{The empirical process of residuals}
\label{sec3}
In this section we investigate the asymptotic properties of the empirical residual process
    $$\sqrt{n}(\hF(t)-\F(t)) := \sqrt{n}\sum_{ \mathbf{k} \in \mathbf{K}}
    \weight \big(\mathbbm{1}\left\{ \resk \le t\right\}-\F(t)\big),~ t \in \RR, $$
 where $\F$ denotes the residual distribution function and
  \begin{equation}
    \resk := Y_\mathbf{k}-\mathcal{R}\est(\desk), ~ \mathbf{k} \in \mathbf{K} \label{estimatedresidual}
\end{equation}
 the  $\mathbf{k}$th residual obtained from  the estimate
 $\hat g$.
 The weights $\weightk$ are defined in Section \ref{spec_cut_est}.
 We begin by showing a uniform convergence result for $\est$. For this
 purpose we derive uniform approximation rates  for bias and variance and subsequently
 balance these two, to get optimal results. The proofs of the following
 results are complicated and therefore deferred to the Appendix.

\begin{lemma} \label{lemma1}
 Suppose that Assumption \ref{assumption1} holds and that $\funk \in \smoothoc$
  for some $v\ge 5$. Then  the  estimator $\hat g$ in  \eqref{estimator} satisfies
\begin{equation*}
    \big\| \mathbb{E}\est(z)-\funk(z)\big\|_\infty =
    \mathcal{O}\lb t_n^{-(v-1)} +  t_n^{8}n^{-1}  \rb,
\end{equation*}
where $\| g\|_\infty:=\sup_{z \in \mathcal{B}} |g(z)|$.
\end{lemma}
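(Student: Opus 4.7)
The plan is to decompose the bias into a truncation error and a quadrature (discretisation) error, and to bound each separately. Using the expansion \eqref{inverse} from Proposition \ref{proposition1}, write
\begin{equation*}
\mathbb{E}\hat g(z) - g(z) \;=\; \underbrace{\sum_{m=0}^{t_n}\sum_{l=-m}^{m}\sqrt{m+1}\,\varphi_{(l,m)}(z)\bigl[\mathbb{E}\hat R(l,m)-R(l,m)\bigr]}_{=: Q(z)} \;-\; \underbrace{\sum_{m=t_n+1}^{\infty}\sum_{l=-m}^{m}\sqrt{m+1}\,\varphi_{(l,m)}(z)\,R(l,m)}_{=: T(z)}.
\end{equation*}
The target rates $t_n^{-(v-1)}$ and $t_n^{8}n^{-1}$ should arise from $T$ and $Q$ respectively.

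For the truncation term $T$, I would use the crude pointwise bound $|\varphi_{(l,m)}(r,\theta)|\le \sqrt{m+1}$, which follows from $|R_m^{|l|}(r)|\le 1$ on $[0,1]$. Then
\begin{equation*}
\|T\|_\infty \;\le\; \sum_{m>t_n}\sum_{l=-m}^{m}(m+1)|R(l,m)| \;\le\; (t_n+1)^{-(v-1)}\sum_{m>t_n}\sum_{l=-m}^{m}|R(l,m)|(m+1)^v \;=\; \mathcal{O}\bigl(t_n^{-(v-1)}\bigr),
\end{equation*}
where the last bound uses the defining series condition of the smoothness class $\smoothoc$.

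For the quadrature term $Q$, the key quantity is $\mathbb{E}\hat R(l,m)-R(l,m)$, which is the error of the design-based quadrature rule $\sum_{\mathbf k}\lambda(\GSk)\,\overline{\psi_{(l,m)}}(\desk)\mathcal{R}g(\desk)$ for the integral $\int_{\mathcal D}\overline{\psi_{(l,m)}}\mathcal{R}g\,d\lambda$. On each cell $\GSk$, I would Taylor-expand the integrand $f_{(l,m)}:=\overline{\psi_{(l,m)}}\mathcal{R}g$ around $\desk$: the $\phi$-linear term integrates to zero because $z_{k_2}^2$ is the midpoint of the $\phi$-interval, and the $s$-linear term integrates to zero by the very definition \eqref{quadrature} of $z_{k_1}^1$ (which kills the linear part against the weight $\sqrt{1-s^2}$). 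Hence the per-cell error is controlled by the $C^2$-norm of $f_{(l,m)}$ times $\mathrm{area}(\GSk)\cdot h^2$ with $h\sim 1/q\sim n^{-1/2}$, so after summing over the $n$ cells the total error is $\mathcal{O}(n^{-1}\|f_{(l,m)}\|_{C^2})$. Markov's inequality applied to the Chebyshev polynomial $U_m$ and to $e^{il\phi}$ (with $|l|\le m$) yields $\|\psi_{(l,m)}\|_{C^2}=\mathcal{O}(m^5)$; since $v\ge 5$, Proposition \ref{proposition1} guarantees that $\mathcal{R}g\in C^2(\mathcal D)$ with a fixed bound. Therefore $|\mathbb{E}\hat R(l,m)-R(l,m)|\le C m^5 n^{-1}$, and
\begin{equation*}
\|Q\|_\infty \;\le\; C n^{-1}\sum_{m=0}^{t_n}(m+1)(2m+1)\,m^5 \;=\; \mathcal{O}\bigl(t_n^{8}n^{-1}\bigr).
\end{equation*}

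The main obstacle is Step 3: extracting a clean $\mathcal{O}(m^5 n^{-1})$ bound for the quadrature error uniformly over $(l,m)$ with $m\le t_n$. One must verify that the vanishing-linear-term argument works cleanly for the product $\overline{\psi_{(l,m)}}\mathcal{R}g$ (where only one factor interacts with the weight $\sqrt{1-s^2}$), and that Markov-type estimates for $U_m$ on $[0,1]$ do not suffer additional endpoint losses when the $\sqrt{1-s^2}$-weighted remainder is integrated on cells adjacent to $s=1$. Given that the cell widths are fixed independent of location and the weight is bounded, these technicalities should go through, producing the stated bound.
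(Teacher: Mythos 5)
Your proposal is correct and follows essentially the same route as the paper: the same split into a truncated-range quadrature term and a tail term, the tail bounded via $\|\varphi_{(l,m)}\|_\infty\le\sqrt{m+1}$ and the series condition defining $\smoothoc$, and the per-coefficient bound $|\mathbb{E}\hat R(l,m)-R(l,m)|\le Cm^5n^{-1}$ obtained exactly as in the paper's Proposition \ref{proposition4} (Taylor expansion on each cell with the linear terms killed by the design, second derivatives of $\psi_{(l,m)}$ of order $m^5$, and $\mathcal{R}g\in C^2$ from Proposition \ref{proposition1}). The only cosmetic difference is that you invoke a Markov-type inequality for the derivatives of $U_m$ where the paper uses explicit recursion identities (its Proposition \ref{proposition3}), yielding the same $\mathcal{O}(m^5)$ bound.
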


Next we derive a uniform bound for the random error of  the  estimator $\hat g$. %The strong moment
 %assumptions result from the triangular form of the error in the regression problem
 \eqref{regressionproblem}.

\begin{lemma} \label{lemma2}
 Suppose that Assumption \ref{assumption1} holds and that
 $\mathbb{E}|\varepsilon|^ \kappa<\infty$ for some $\kappa>3$. Additionally let
 the sequence $(t_n)_{n \in \mathbb{N}}$ satisfy $t_n n^{-1/2} =
 \mathcal{O}(1)$. Then the  estimator $\hat g$ in  \eqref{estimator} satisfies
\begin{equation*}
    \big\|\est(z)-\mathbb{E}\est(z)\big\|_\infty= \mathcal{O}
    \big( t_n^{4}     \log(n)^{1/2}n^{-1/2} \big) ~~~a.s.
\end{equation*}
\end{lemma}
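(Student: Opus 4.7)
The plan is to recast $\est(z)-\mathbb{E}\est(z)$ as a weighted linear functional of the errors against a deterministic kernel, bound that kernel (and its gradient) in sup-norm, and combine Bernstein's inequality with a truncation/discretization step plus Borel--Cantelli to turn a pointwise bound into an a.s.\ uniform one.

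First I would write out the fluctuation explicitly. Inserting \eqref{R_N} into \eqref{estimator} and subtracting the mean gives, for every $z\in\mathcal{B}$,
$$
\est(z)-\mathbb{E}\est(z) \;=\; \sum_{\k\in\K}\weightk\, K_n(z,\desk)\,\ve_\k, \qquad K_n(z,z'):=\sum_{m=0}^{t_n}\sum_{l=-m}^{m}\sqrt{m+1}\,\varphi_{(l,m)}(z)\,\overline{\psi_{(l,m)}}(z').
$$
Using $|R_m^{|l|}(r)|\le 1$ (hence $|\varphi_{(l,m)}|\le\sqrt{m+1}$) and $|U_m(s)|\le m+1$ on $[-1,1]$, each summand of $K_n$ is of order $(m+1)^2$, so $\|K_n\|_\infty = O(t_n^4)$; a Markov-type inequality for the polynomial derivative of $R_m^{|l|}$ similarly yields $\|\nabla_z K_n\|_\infty = O(t_n^6)$. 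The diagonal quantity $\sup_{z\in\mathcal{B}}\sum_{\k}\weightk^{2} K_n(z,\desk)^2 = O(t_n^4/n)$ follows from $\weightk\asymp n^{-1}$ by approximating the sum by the corresponding $\lambda$-integral and using orthonormality of $\{\psi_{(l,m)}\}$, leaving $\sum_{m\le t_n}\sum_l(m+1)|\varphi_{(l,m)}(z)|^2 = O(t_n^4)$.

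Next I would deal with the fact that the errors have only $\kappa>3$ moments. Fix a small $\delta>0$ with $1/\kappa+\delta<1/2$ and set $M_n:=n^{1/\kappa+\delta}$; since $\sum_n nP(|\ve|>M_n)<\infty$ under $\mathbb{E}|\ve|^\kappa<\infty$, Borel--Cantelli guarantees that a.s.\ eventually $\max_\k|\ve_\k|\le M_n$, so it suffices to control the centred truncated sum $S_z^{\mathrm{tr}}:=\sum_\k \weightk K_n(z,\desk)(\ve_\k\mathbbm{1}\{|\ve_\k|\le M_n\}-\mathbb{E}[\ve\mathbbm{1}\{|\ve|\le M_n\}])$; the centring correction is at most $C M_n^{1-\kappa}\|K_n\|_\infty$, which is negligible relative to $t_n^4\sqrt{\log n/n}$. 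Each summand of $S_z^{\mathrm{tr}}$ is bounded by $Ct_n^4 M_n/n$ and its variance sums to $O(t_n^4/n)$, so Bernstein's inequality gives
$$
P\big(|S_z^{\mathrm{tr}}|>\eta\big)\le 2\exp\!\left(-\frac{C\eta^2}{t_n^4/n+t_n^4 M_n\eta/n}\right).
$$
Inserting $\eta = C_0 t_n^4\sqrt{\log n/n}$ keeps the variance term dominant, because $M_n \ll \sqrt{n/\log n}$ by our choice of $\delta$ and $\kappa>3$; the resulting pointwise tail is at most $n^{-C_0'}$ with $C_0'$ arbitrarily large for $C_0$ large.

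Finally I would pass to a uniform bound by a grid argument: cover $\mathcal{B}$ with a net $\mathcal{G}_n$ of cardinality $n^A$, union-bound the Bernstein tail over $\mathcal{G}_n$, and control the oscillation of $S_z^{\mathrm{tr}}$ between grid points by $\|\nabla_z K_n\|_\infty\cdot M_n \cdot n^{-A/2} = O(t_n^6 M_n n^{-A/2})$, which is $o(t_n^4\sqrt{\log n/n})$ once $A$ is large enough. Summability of the resulting tail probabilities in $n$ and a further Borel--Cantelli step yield the claimed almost-sure rate. The main obstacle is the careful balancing of truncation level, variance bound, and Bernstein exponent, ensuring that the weak moment assumption $\kappa>3$ (rather than all-finite moments) does not degrade the final rate beyond $t_n^4\sqrt{\log n/n}$; a secondary technical point is obtaining a sufficiently sharp gradient bound for $K_n$ that still permits a polynomially large net.
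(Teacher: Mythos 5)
Your overall strategy (write the fluctuation as a linear functional of the errors, truncate, apply Bernstein plus a union bound, and finish with Borel--Cantelli) matches the paper's, but you implement the uniformity over $z\in\mathcal{B}$ differently: the paper pulls $\|\varphi_{(l,m)}\|_\infty\le\sqrt{m+1}$ out of the double sum and thereby reduces the supremum over $z$ to a maximum over the $\mathcal{O}(t_n^2)$ coefficient indices $(l,m)$ with $m\le t_n$, so no spatial net and no gradient bound on your kernel $K_n$ are ever needed. Your net-plus-oscillation argument also works and is closer to classical kernel-estimation proofs, at the price of the extra derivative bounds on the radial and Chebyshev polynomials; the coefficientwise reduction is the cheaper route here because the estimator is a finite linear combination of explicitly bounded basis functions.

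There is, however, one step that fails as written. You truncate at $M_n=n^{1/\kappa+\delta}$ and require simultaneously that $1/\kappa+\delta<1/2$ (so that $M_n\ll\sqrt{n/\log n}$ in the Bernstein step) and that $\sum_n nP(|\varepsilon|>M_n)<\infty$. Markov's inequality gives $nP(|\varepsilon|>M_n)\le C n^{-\kappa\delta}$, so summability over all $n\in\mathbb{N}$ forces $\delta>1/\kappa$, hence $1/\kappa+\delta>2/\kappa$, which is compatible with $1/\kappa+\delta<1/2$ only when $\kappa>4$. For $3<\kappa\le4$ --- precisely the regime the hypothesis $\kappa>3$ is meant to cover --- your two requirements are contradictory. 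The repair is the observation the paper makes explicitly: under Assumption \ref{assumption1} the sample sizes run through the subsequence $n=pq\asymp q^2$, so the Borel--Cantelli sum is effectively $\sum_q q^{-2\kappa\delta}$ and only $\delta>1/(2\kappa)$ is needed; then $1/\kappa+\delta$ may be taken just above $3/(2\kappa)$, which is below $1/2$ exactly when $\kappa>3$. (The paper truncates directly at $d_n=n^{1/2}\log(n)^{-1/2}$ and checks $\sum_q q^{2-\kappa}\log(C_2q^2)^{\kappa/2}<\infty$, which is the same accounting.) Two smaller points: your variance bound $\mathcal{O}(t_n^4/n)$ via a sum-to-integral approximation is delicate when $t_n$ is as large as $n^{1/2}$, since the Riemann-sum error involves derivatives of $K_n^2$; but the crude bound $\max_{\mathbf{k}}w_{\mathbf{k}}\cdot\|K_n\|_\infty^2=\mathcal{O}(t_n^8/n)$ already yields a Bernstein exponent of order $\log n$, which suffices. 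And your claim that the variance term dominates the Bernstein denominator is not needed: if the range term dominates instead, the exponent becomes of order $\sqrt{n\log n}/M_n$, which grows even faster than $\log n$.
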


 Balancing the two upper bounds for the deterministic and random part yields an optimal choice of
 the bandwidth. More precisely for $v\ge 5$ the choice
\begin{equation}
    t_n := \Theta \left( \left(\log(n)^{-1} n \right )^\frac{1}{2(v+3)} \right )
    \label{truncationparameter}
\end{equation}

 \noindent balances the upper bound from Lemma \ref{lemma2} with the leading term
 $\mathcal{O}(t_n^{-(v-1)})$ of the bias from Lemma \ref{lemma1}. Combining these
 results yields the first part of the following   theorem.

\begin{theorem} \label{theorem1}
 Let Assumption \ref{assumption1} hold, suppose that $\funk \in \smoothoc$ for some
 $v\ge 5$ and that $\mathbb{E}|\varepsilon|^\kappa<\infty$ for some $\kappa>3$.
 Additionally let $t_n$ be chosen as in \eqref{truncationparameter}. Then
\begin{equation}
    \| \funk(z)-\est(z)\|_\infty =
    \mathcal{O}\left(\left(\log(n)n^{-1}\right)^{\frac{v-1}{2(v+3)}} \right)
\end{equation}
 and for all $\tau\le v$
\begin{equation}
    \sum_{m=0}^\infty \sum_{l=-m}^m m^\tau \big| \lbp \mathcal{R}
    \lbr\funk-\est\rbr,  \psi_{(l,m)} \rbp_{\mathcal{L}^2(\mathcal{D}, \lambda)} \big|
    =\mathcal{O}\lb    n^{\frac{v-\tau}{2(v+3)}}\rb~~~a.s. \label{ellipsedecay}
\end{equation}
\end{theorem}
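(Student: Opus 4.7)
The plan is to split the theorem into its two claims. For the uniform rate I would apply the triangle inequality $\| \funk - \est\|_\infty \le \| \mathbb{E}\est - \funk\|_\infty + \| \est - \mathbb{E}\est\|_\infty$ and invoke Lemmas~\ref{lemma1} and~\ref{lemma2}. Substituting the bandwidth \eqref{truncationparameter}, both the leading bias contribution $t_n^{-(v-1)}$ and the stochastic contribution $t_n^4\log(n)^{1/2}n^{-1/2}$ reduce to the common order $(\log(n)n^{-1})^{(v-1)/(2(v+3))}$, while the remainder $t_n^8 n^{-1}$ is of strictly smaller order as soon as $v \ge 5$ (indeed $(v+7)/(2(v+3)) < 1$ for $v>1$). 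This produces the first displayed rate.

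For the second claim I would first exploit the singular value relation $\R \varphi_{(l,m)} = (m+1)^{-1/2}\psi_{(l,m)}$. Applying $\R$ to the estimator \eqref{estimator} gives $\R \est = \sum_{m=0}^{t_n}\sum_{l=-m}^{m} \rpN\,\psi_{(l,m)}$, and combined with the expansion \eqref{L2expansionRg} of $\R\funk$, the orthonormality of $\{\psi_{(l,m)}\}$ in $\mathcal{L}^2(\mathcal{D}, \lambda)$ yields
\begin{equation*}
\lbp \R(\funk - \est), \psi_{(l,m)} \rbp_{\mathcal{L}^2(\mathcal{D}, \lambda)} =
\begin{cases}
\rp - \rpN, & m \le t_n, \\
\rp, & m > t_n.
\end{cases}
\end{equation*}
The target sum then splits into a truncation tail $S_2 := \sum_{m > t_n}\sum_{l=-m}^{m} m^\tau|\rp|$ and a low-frequency part $S_1 := \sum_{m \le t_n}\sum_{l=-m}^{m} m^\tau|\rp - \rpN|$. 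The tail is handled by factoring out $t_n^{\tau - v}$ and using $\funk \in \smoothoc$, which gives $S_2 = \mathcal{O}(t_n^{\tau - v})$ and is absorbed into the claimed bound.

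For $S_1$ I would decompose further into $(\rpN - \mathbb{E}\rpN) + (\mathbb{E}\rpN - \rp)$. The deterministic piece is the midpoint-rule quadrature error over boxes of side $\sim n^{-1/2}$; bounding the first two derivatives of $\overline{\psi_{(l,m)}}\,\R \funk$ via the classical growth bounds on derivatives of $U_m$ and the differentiability of $\R\funk$ ensured by Proposition~\ref{proposition1} gives a per-cell error which, summed against $m^\tau$, is strictly dominated by the stochastic contribution. For the stochastic part, the Chebyshev bound $|\psi_{(l,m)}| \le m+1$ yields $\Var(\rpN - \mathbb{E}\rpN) \le C(m+1)^2/n$. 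A Rosenthal/Fuk--Nagaev-type maximal inequality over the $\mathcal{O}(t_n^2)$ indices $(l,m)$, combined with a Borel--Cantelli step exploiting $\mathbb{E}|\varepsilon|^\kappa < \infty$ for some $\kappa > 3$, then delivers $\max_{m \le t_n,\,l}|\rpN - \mathbb{E}\rpN|/(m+1) = \mathcal{O}(\log(n)^{1/2}n^{-1/2})$ almost surely. Summing against $m^\tau$ produces $\mathcal{O}(t_n^{\tau + 3}\log(n)^{1/2}n^{-1/2})$, which after substituting \eqref{truncationparameter} is of order $n^{-(v-\tau)/(2(v+3))}$ up to polylogarithmic factors, and hence comfortably inside $\mathcal{O}(n^{(v-\tau)/(2(v+3))})$.

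The main obstacle will be the uniform almost-sure maximal inequality on $\rpN - \mathbb{E}\rpN$ with only $\kappa > 3$ moments at hand: producing a Rosenthal/Fuk--Nagaev bound strong enough to be upgraded to an a.s.\ statement along a sufficiently fast subsequence requires a careful truncation of the $\varepsilon_\k$ and separate control of the resulting tail. A secondary technical challenge is the quadrature bias, where the rapidly growing derivatives of $\psi_{(l,m)}$ in $m$ have to be balanced against the limited $\lfloor (v-1)/2\rfloor$-fold smoothness of $\R\funk$ from Proposition~\ref{proposition1}, so the Taylor-based error analysis must be carried out carefully to avoid spurious polynomial-in-$m$ losses that would spoil the stated rate.
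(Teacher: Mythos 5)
Your proposal is correct and follows essentially the same route as the paper: the first part is the same bias/variance triangle inequality combining Lemmas \ref{lemma1} and \ref{lemma2}, and the second part uses the same identity $\langle \mathcal{R}(g-\hat g),\psi_{(l,m)}\rangle_{\mathcal{L}^2(\mathcal{D},\lambda)} = R(l,m)-\hat R(l,m)\mathbbm{1}\{m\le t_n\}$ and the same three-way split into quadrature bias (Proposition \ref{proposition4}), stochastic part (truncation, an exponential inequality and Borel--Cantelli, exactly as in the proof of Lemma \ref{lemma2}), and smoothness tail. The only substantive difference is your tail estimate $\mathcal{O}(t_n^{\tau-v})$, which is sharper than the paper's $o(t_n^{v-\tau})$ and would in fact give a decaying rather than growing bound in \eqref{ellipsedecay}; both suffice for the statement as written.
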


 By the same techniques uniform bounds can be deduced for the derivatives of our
 estimators.

\begin{corollary} \label{corollary1}
 Let the assumptions of Theorem \ref{theorem1}  hold, let $t_n$ be of order $o(n^{1/4})$ and
 suppose $v \ge 2k+1$ for some  $k \in \mathbb{N}_0$. Additionally let $\alpha, \beta \in
 \mathbb{N}_0$, such that $\alpha+\beta=k$. Then

\begin{equation}
    \left\| \frac{\partial^\alpha}{\partial r^\alpha}\frac{\partial^\beta}
    {\partial \theta^\beta}\funk- \frac{\partial^\alpha}{\partial r^\alpha}
    \frac{\partial^\beta}{\partial \theta^\beta} \est \right\|_\infty = \mathcal{O}\lb\lb
    \log(n)^{1/2}n^{-1/2} t_n^{2k+4}+t_n^{v-(2k+1)} \rb\rb ~~~a.s.
\end{equation}
\end{corollary}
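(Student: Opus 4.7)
The plan is to mirror the bias--variance decomposition of Lemmas~\ref{lemma1} and~\ref{lemma2}, but carried out at the level of the differentiated estimator. Since $\est$ is a finite linear combination of basis functions with random coefficients, the partial derivatives commute with the outer sum:
$$
D\est(r,\theta) = \sum_{m=0}^{t_n}\sum_{l=-m}^m \sqrt{m+1}\,(D\varphi_{(l,m)})(r,\theta)\,\rpN,
$$
where $D:=\partial^{\alpha}/\partial r^{\alpha}\,\partial^{\beta}/\partial\theta^{\beta}$. The assumption $v\ge 2k+1$ combined with Proposition~\ref{proposition1} ensures that the series for $\funk$ from \eqref{inverse} converges uniformly after termwise differentiation, so that the estimation error admits the decomposition
$$
D\est - D\funk = \bigl(D\est - \mathbb{E}D\est\bigr) + \bigl(\mathbb{E}D\est - D\funk\bigr),
$$
each summand of which will be bounded separately.

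The auxiliary ingredient I would establish first is a uniform estimate on the derivatives of the basis functions of the form
$$
\bigl\|D\varphi_{(l,m)}\bigr\|_\infty \;\le\; C\,(m+1)^{1/2+2k}.
$$
The $\beta$-fold $\theta$-derivative contributes a factor $(il)^\beta$ with $|l|\le m$, while differentiating the radial Zernike polynomial $R_m^{|l|}(r)$ -- a polynomial of degree $m$ bounded by $1$ on $[0,1]$ -- yields a factor at most of order $m^{2\alpha}$ via a Markov--Bernstein type inequality. The crude bound $2\alpha+\beta\le 2k$ then produces the stated exponent. I expect this step to be the main technical obstacle, since sharp derivative estimates for Jacobi-type orthogonal polynomials do not follow directly from the defining series and require either an explicit representation of the derivatives or an appeal to a general extremal inequality.

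Once the derivative bound is available, the random part is treated exactly as in Lemma~\ref{lemma2}. The same Bernstein-type concentration for $\rpN - \mathbb{E}\rpN$, now weighted by $|D\varphi_{(l,m)}|$, contributes an additional factor of at most $t_n^{2k}$ relative to the $t_n^{4}$ appearing in that lemma, producing the almost sure bound $t_n^{2k+4}\log(n)^{1/2}n^{-1/2}$. The hypothesis $t_n=o(n^{1/4})$ plays the same role as in Lemma~\ref{lemma1}: it guarantees that the higher-order discretization residual arising from the midpoint-rule approximation of $R(l,m)$ by $\mathbb{E}\rpN$ -- now also multiplied by the derivative bound -- is absorbed into the leading terms rather than dictating the rate.

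For the deterministic part I would insert the series expansion \eqref{inverse} and split $\mathbb{E}D\est - D\funk$ into the same discretization contribution (treated as above) plus the truncation tail
$$
\sum_{m>t_n}\sum_{l=-m}^m \sqrt{m+1}\,D\varphi_{(l,m)}\,R(l,m),
$$
whose absolute value is dominated by $\sum_{m>t_n}\sum_{l=-m}^m (m+1)^{2k+1}|R(l,m)|$. Since $v\ge 2k+1$, the factor $(m+1)^{2k+1}$ can be traded for the available weight $(m+1)^{v}$ from the smoothness class \eqref{ellipsoidondition} at the cost of the prefactor $t_n^{2k+1-v}$, and the absolute convergence built into $\smoothoc$ then bounds the tail by a constant multiple of $t_n^{-(v-(2k+1))}$. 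Combining the random and deterministic bounds yields the almost sure rate claimed in the corollary.
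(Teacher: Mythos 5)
Your proposal is correct and follows essentially the route the paper intends: the paper gives no separate proof of this corollary, stating only that it follows ``by the same techniques'' as Lemmas \ref{lemma1} and \ref{lemma2}, namely termwise differentiation of the truncated series and of the expansion \eqref{inverse} (justified by Proposition \ref{proposition1} under $v\ge 2k+1$), with the extra factor $m^{2k}$ propagated through the bias and variance bounds exactly as you describe. The derivative estimate you flag as the main technical obstacle is already supplied in the paper as Proposition \ref{proposition3}, proved by induction from the explicit recurrence \eqref{Rad3} for derivatives of the radial polynomials rather than a Markov--Bernstein argument, and it yields precisely your bound $\|D\varphi_{(l,m)}\|_\infty\le (m+1)^{1/2}m^{2k}$.
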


In order to prove the weak convergence of the process
 $\sqrt{n}(\estcdf-\cdf)$ we consider the {\em bracketing metric entropy}
of the subclass
\begin{equation}
    \smoothel2:=\Big\{\funk:\mathcal{B} \to \mathbb{R}\big| \funk~~
     \textit{continuous},
    \| \funk\|_\infty \le 1,  \sum_{m=0}^\infty
     \sum_{l=-m}^m \la \rp \ra (m+1)^{\smoothindex}\le 1, \Big\},
     \label{octahedralcondition2}
\end{equation}
 for some $\tau>0$. Theorem \ref{theorem1} implies that for all $\tau<v$ the difference $\hat{g}-g$ eventually lies in $\smoothel2$. As we know from Proposition \ref{proposition1} the condition 
 $$
 \sum_{m =0}^\infty \sum_{l=-m}^m (m+1)^\tau \big|\left<\R h, \psi_{(l,m)} \right>_{\mathcal{L}^2(\mathcal{D}, \lambda)}\big|\le 1
 $$
 entails that a function $h \in \mathcal{L}^2(\mathcal{B}, \mu)$ is smooth to
 a degree determined by $\tau$.  This implies that a  finite-dimensional representation
 can be used as an adequate approximation of $h$, in our case a
 truncated
 $\mathcal{L}^2$-expansion. We employ these considerations to derive the
 following result about the complexity of the class $\smoothel2$, which is of
 independent interest and is proven in Appendix B (see section \ref{ApB4}).

\begin{proposition} \label{proposition5}
 Let $\smoothindex>3$, then for any $t \in (3, \smoothindex)$ and sufficiently small
 $\epsilon>0$
\begin{equation}
    \log \lb N_{[]}(\epsilon, \smoothel2, \|\cdot\|_\infty) \rb \le C \lb
    \frac{1}{\epsilon}\rb^{\frac{2}{\smoothindex-t}}.
\end{equation}
 $N_{[]}(\epsilon, \smoothel2, \|\cdot\|_\infty) $ denotes the minimal number of
 $\epsilon$-brackets with respect to  $\| \cdot \|_\infty$ needed to cover the
 smoothness class $\smoothel2$.
\end{proposition}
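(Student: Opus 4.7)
The plan is to build brackets via the inverse SVD expansion of $g$ guaranteed by Proposition \ref{proposition1}. Every $g \in \smoothel2$ admits the pointwise identity
$g = \sum_{m \ge 0}\sum_{l=-m}^{m}\sqrt{m+1}~\varphi_{(l,m)}~\rp$,
so I would split $g = g_M + r_M$, with $g_M$ the partial sum up to level $m=M$, and construct brackets by (i) uniformly controlling the tail $r_M$ and (ii) discretizing the finitely many retained coefficients.

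For (i), using the sharp Zernike bound $\|\varphi_{(l,m)}\|_\infty \le \sqrt{m+1}$ (a direct consequence of $|R_m^{|l|}(r)| \le 1$ on $[0,1]$) together with the constraint $\sum_{m,l}(m+1)^{\smoothindex}|\rp| \le 1$ defining $\smoothel2$, I would derive
$\|r_M\|_\infty \le \sum_{m > M}(m+1)\sum_l|\rp| \le (M+1)^{1-\smoothindex}\sum_{m>M}(m+1)^{\smoothindex}\sum_l|\rp| \le (M+1)^{1-\smoothindex}$.
Choosing $M \asymp \epsilon^{-1/(\smoothindex-t)}$ makes the tail bound $O(\epsilon)$.

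For (ii), I would discretize each retained coefficient $\rp$ on a grid of spacing $\delta(m+1)^{-t}$. The resulting uniform $L^\infty$-approximation error is
$\sum_{m \le M}\sum_l (m+1)\cdot \delta(m+1)^{-t} \asymp \delta \sum_{m \le M}(m+1)^{2-t}$,
where the factor $(m+1)$ per summand arises from $\sqrt{m+1}\cdot \|\varphi_{(l,m)}\|_\infty$ combined with $\sum_{l}1 = m+1$ (the cardinality of $\{l:(l,m)\in\mathcal{N}\}$). This series converges precisely when $t>3$, which explains the restriction in the proposition; setting $\delta \asymp \epsilon$ keeps the discretization error at order $\epsilon$ as well. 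Since both error estimates are pointwise-uniform, the pair
$g_L := g^*-\eta,\qquad g_U := g^*+\eta$
with $g^*$ the discretized partial sum and $\eta$ the sum of the two uniform bounds forms a valid $L^\infty$-bracket of length $O(\epsilon)$.

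The bracket count comes from the product of the grid sizes. Using $|\rp|\le (m+1)^{-\smoothindex}$, extracted from the sum constraint, each retained coefficient has at most $1 + 2\delta^{-1}(m+1)^{t-\smoothindex} \lesssim 1/\epsilon$ grid values, so
$\log N_{[]}(\epsilon, \smoothel2, \|\cdot\|_\infty) \lesssim \sum_{m \le M}(m+1)\log(1/\epsilon) \asymp M^2 \log(1/\epsilon) \asymp \epsilon^{-2/(\smoothindex - t)}\log(1/\epsilon)$,
and the spurious logarithmic factor is absorbed by enlarging $t$ slightly within $(3, \smoothindex)$, yielding the stated bound. The main obstacle, I expect, is the uniform (rather than $L^2$) bookkeeping: to form genuine $L^\infty$-brackets $[g^*-\eta, g^*+\eta]$, the approximation error estimate must be independent of $x$, which in turn relies on the pointwise validity of the SVD expansion from Proposition \ref{proposition1} (available because $\smoothindex>3\ge 1$) and on the sharp Zernike bound $\|\varphi_{(l,m)}\|_\infty \le \sqrt{m+1}$; without these, one obtains only an $L^2$-bracketing estimate, which is insufficient for the residual empirical process argument outlined in Section \ref{sec3}.
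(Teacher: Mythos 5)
Your proposal is correct and follows essentially the same route as the paper: both use the pointwise validity of the SVD expansion from Proposition \ref{proposition1} together with $\|\varphi_{(l,m)}\|_\infty\le\sqrt{m+1}$, truncate at $M\asymp\epsilon^{-1/(\smoothindex-t)}$ (the a priori coefficient bound making higher frequencies negligible), discretize the surviving coefficients on grids of spacing proportional to $(m+1)^{-t}$ so that the resulting $\sum_{m\le M}(m+1)^{2-t}$ converges exactly when $t>3$, count roughly $\epsilon^{-1}$ grid values per coefficient over $\asymp M^2$ coefficients, and absorb the residual $\log(1/\epsilon)$ by perturbing $t$ within $(3,\smoothindex)$. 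The only cosmetic differences are that the paper first recasts the constraint in terms of the coefficients $\langle g,\varphi_{(l,m)}\rangle$ with shifted exponents $\smoothindex-1/2$ and $t-1/2$ and passes through a covering-number-to-bracketing observation, whereas you build the brackets $[g^*-\eta,\,g^*+\eta]$ directly; neither change affects the substance.
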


 For the next step recall the definition of the estimated residuals $\resestk$ in
 \eqref{estimatedresidual}, as well as the estimate for the residual distribution 
 function $\estcdf$ in \eqref{estimateddistributionfunction}. In order to prove a uniform CLT for
 $\sqrt{n}(\estcdf-\cdf)$ we disentangle the dependencies of the terms in $\estcdf$ in the
 next result.

\begin{theorem} \label{theorem2}
 Assume that $\funk \in \smoothoc$ for some $v >5$,
 $\mathbb{E}|\varepsilon|^\kappa<\infty$ for some $\kappa>3$, that $\cdf$ admits a
  H\"older continuous density $\dens$ with exponent $\zeta>4/(v-1)$ and that Assumption
  \ref{assumption1} holds. If the bandwidth $t_n$ satisfies \eqref{truncationparameter}, then
\begin{equation}
    \sup_{t \in \RR} \la \sum_{\mathbf{k}\in \K} \weight \left[ \mathbbm{1}\{
    \resestk \le t\} -\mathbbm{1}\{\varepsilon_\mathbf{k }\le t\} - \varepsilon_\k
    \dens(t) \right] \ra =o_P \lb n^{-1/2}\rb.
\end{equation}
\end{theorem}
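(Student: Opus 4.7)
The strategy is to expand the indicator differences around $\varepsilon_k$ to first order, separating a ``stochastic fluctuation'' piece from a ``deterministic drift'' piece, and then to show that the drift, despite depending on $\hat g$, can in fact be matched with $f_\varepsilon(t)\sum_\k w_\k \varepsilon_\k$ owing to the specific spectral cutoff construction of $\hat g$. Set $\Delta_\k:=\hat\varepsilon_\k-\varepsilon_\k = -\mathcal{R}(\hat g-g)(\desk)$, so that $\mathbbm{1}\{\hat\varepsilon_\k\le t\}=\mathbbm{1}\{\varepsilon_\k\le t-\Delta_\k\}$. Write
\begin{equation*}
\mathbbm{1}\{\hat\varepsilon_\k\le t\}-\mathbbm{1}\{\varepsilon_\k\le t\} = A_\k(t) + B_\k(t),
\end{equation*}
where $A_\k(t):=[\mathbbm{1}\{\varepsilon_\k\le t-\Delta_\k\}-F(t-\Delta_\k)]-[\mathbbm{1}\{\varepsilon_\k\le t\}-F(t)]$ is a centred empirical process increment, and $B_\k(t):=F(t-\Delta_\k)-F(t)$ is the drift. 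These two pieces are handled by essentially independent arguments.

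For the fluctuation piece, by Theorem \ref{theorem1} one can pick $\tau\in(3,v)$ so that $(\hat g-g)/\delta_n\in\smoothel2$ eventually, where $\delta_n=o(1)$ is the maximum of the uniform rate and the ellipsoidal rate \eqref{ellipsedecay}. The relevant class is
\begin{equation*}
\mathcal{F}_n:=\bigl\{(x,z)\mapsto\mathbbm{1}\{x\le t+\mathcal{R}h(z)\}-F(t+\mathcal{R}h(z))-\mathbbm{1}\{x\le t\}+F(t):\ t\in\RR,\ h\in\delta_n\smoothel2\bigr\}.
\end{equation*}
Proposition \ref{proposition5} gives a bracketing entropy bound for $\smoothel2$, which transfers to $\mathcal{F}_n$ after noting that indicators and H\"older continuous $F$ are Lipschitz-like in $\mathcal{R}h$; the $L^2$-envelope of $\mathcal{F}_n$ shrinks at a rate driven by $\delta_n$ and the H\"older exponent $\zeta$. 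A standard bracketing maximal inequality then yields $\sup_t|\sum_\k w_\k A_\k(t)|=o_P(n^{-1/2})$. This is the main technical obstacle because $\hat g$ and the $\varepsilon_\k$'s share dependence; handling this requires either a leave-one-out / sieve argument or, as is more natural here, simply exploiting the shrinkage of the class $\mathcal{F}_n$ so that the fluctuations never see the full empirical process.

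For the drift, H\"older continuity of $f_\varepsilon$ with exponent $\zeta>4/(v-1)$ gives $B_\k(t)=-f_\varepsilon(t)\Delta_\k+r_\k(t)$ with $|r_\k(t)|\le C|\Delta_\k|^{1+\zeta}$, and Theorem \ref{theorem1} implies $\sum_\k w_\k|r_\k(t)|\le C\|\mathcal{R}(\hat g-g)\|_\infty^{1+\zeta}=o(n^{-1/2})$ uniformly in $t$ by the choice of $\zeta$. It remains to identify $-f_\varepsilon(t)\sum_\k w_\k\Delta_\k$ with $f_\varepsilon(t)\sum_\k w_\k\varepsilon_\k$ up to $o_P(n^{-1/2})$. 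Expanding $\mathcal{R}\hat g$ via the spectral cutoff,
\begin{equation*}
\sum_\k w_\k\mathcal{R}\hat g(\desk)=\sum_{m=0}^{t_n}\sum_{l=-m}^{m}\hat R(l,m)\sum_\k w_\k\psi_{(l,m)}(\desk).
\end{equation*}
The equispaced angular grid makes $\sum_{k_2}\exp(il\cdot 2\pi(k_2+1/2)/p)=0$ whenever $l\not\equiv 0\pmod p$, so for $|l|\le t_n<p$ only $l=0$ contributes; the radial midpoint quadrature \eqref{quadrature} then gives $\sum_\k w_\k\psi_{(0,m)}(\desk)=\delta_{m,0}+O(q^{-2}\|U_m''\|_\infty)$. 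Combining these, $\sum_\k w_\k\mathcal{R}\hat g(\desk)=\hat R(0,0)+\rho_n$ where $\rho_n$ is a quadrature remainder controlled by Assumption \ref{assumption1} and the polynomial growth of $\|U_m''\|_\infty$; the analogous expansion applied to $\mathcal{R}g$ gives $\sum_\k w_\k\mathcal{R}g(\desk)=\langle\mathcal{R}g,1\rangle_{\mathcal{L}^2(\mathcal{D},\lambda)}+\rho_n'$ with a matching remainder. Since $\hat R(0,0)=\sum_\k w_\k Y_\k=\sum_\k w_\k\mathcal{R}g(\desk)+\sum_\k w_\k\varepsilon_\k$, one obtains $-\sum_\k w_\k\Delta_\k=\sum_\k w_\k\varepsilon_\k+o_P(n^{-1/2})$, and multiplication by the uniformly bounded $f_\varepsilon(t)$ preserves the rate. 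The delicate step here is verifying that $t_n$ chosen as in \eqref{truncationparameter} is genuinely small enough relative to $q$ for the quadrature residuals to be $o_P(n^{-1/2})$; this, together with the stochastic equicontinuity argument, is where the compatibility of the smoothness assumption $v>5$, the bandwidth choice, and the H\"older exponent condition $\zeta>4/(v-1)$ is fully used.
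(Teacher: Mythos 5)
Your proposal is correct and, for its two main components, coincides with the paper's argument: your fluctuation term $\sum_{\k\in\K}\weight A_\k(t)$ is precisely the quantity $M_n(t)$ of Lemma \ref{lemma3}, which the paper also controls by replacing $\R[\est-\funk]$ with a generic element of $\R(\mathfrak{O}(\tau,1,1))$ (legitimate by Theorem \ref{theorem1}) and proving stochastic equicontinuity from the bracketing bound of Proposition \ref{proposition5}; and your H\"older--Taylor expansion of the drift uses exactly the exponent bookkeeping $(1+\zeta)\tfrac{v-1}{2(v+3)}>\tfrac12\Leftrightarrow\zeta>4/(v-1)$ that the hypothesis encodes. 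Where you genuinely diverge is the linear drift term. The paper (Proposition \ref{proposition6}) first replaces the Riemann sum $\sum_{\k}\weight\,\cdf\bigl(t+\R[\est-\funk](\desk)\bigr)$ by the integral $\int_{\mathcal D}\cdf\bigl(t+\R[\est-\funk]\bigr)\,d\lambda$, paying a quadrature cost controlled by the gradient bound of Corollary \ref{corollary1} and the $O(n^{-1/2})$ mesh; after that, orthonormality gives the \emph{exact} identity $\int_{\mathcal D}\R[\est-\funk]\,d\lambda=\hat R(0,0)-R(0,0)$, and the claim reduces to Proposition \ref{proposition4} at $(l,m)=(0,0)$. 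You instead Taylor-expand the sum in place and evaluate $\sum_{\k}\weight\,\R[\est-\funk](\desk)$ by explicit quadrature on the basis functions. That route works --- the angular exactness and the midpoint rule \eqref{quadrature} do what you claim --- but it leaves one loose end you should close: the radial quadrature remainder is $\sum_{m\le t_n}|\hat R(0,m)|\cdot O(m^{5}n^{-1})$, so you need a uniform almost sure bound on the random coefficients $\hat R(0,m)$ (obtainable from $|R(0,m)|$ together with the bias bound of Proposition \ref{proposition4} and the $O\bigl((m+1)\log(n)^{1/2}n^{-1/2}\bigr)$ fluctuation bound established inside the proof of Lemma \ref{lemma2}) before concluding that this remainder is $o(n^{-1/2})$; the paper's integral detour sidesteps this entirely because $\langle\R\est,\psi_{(0,0)}\rangle=\hat R(0,0)$ holds with no quadrature error. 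Both routes consume essentially the same smoothness inputs, so the difference is one of bookkeeping rather than strength.
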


\begin{corollary}
 \label{corollary2}
 Under the assumptions of Theorem \ref{theorem2}, the process
\begin{eqnarray*}
    & & \Big\{ \sum_{\k \in \K} n^{1/2} \weight \left\{ \mathbbm{1}
    \{\resestk \le t\} - \cdf(t) \right\} \Big\}_{t \in \mathbb{R}}
\end{eqnarray*}
 converges weakly to a mean zero Gaussian process $G$ with covariance function
\begin{eqnarray*}
    \Sigma(t,\tilde{t}) & := &   \frac{8 \pi^2}{3}\Big(\cdf(\min(t,\tilde{t}))-\cdf(t)
    \cdf(\tilde{t})+\dens(t)\mathbb{E}\left[\varepsilon \mathbbm{1}\{\varepsilon \le
    \tilde{t}\}\right] \\[1ex]
    & + & \dens(\tilde{t})\mathbb{E}\left[\varepsilon \mathbbm{1}\{\varepsilon \le t\}
    \right] + \sigma^2 \dens(t)\dens(\tilde{t}) \Big),~~~t, \tilde{t} \in \RR.
\end{eqnarray*}
\end{corollary}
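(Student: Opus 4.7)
The approach is to exploit the asymptotic linearization of Theorem \ref{theorem2} to reduce everything to a process that is linear in the i.i.d.\ errors $\ve_\k$. Namely, that theorem gives, uniformly in $t$,
\begin{equation*}
\sum_{\k \in \K} \sqrt{n}\,\weight\bigl(\mathbbm{1}\{\resestk \le t\} - \cdf(t)\bigr) = \sum_{\k \in \K} \sqrt{n}\,\weight\bigl[\mathbbm{1}\{\ve_\k \le t\} - \cdf(t) + \ve_\k\,\dens(t)\bigr] + \op ,
\end{equation*}
so it suffices to establish weak convergence in $\ell^\infty(\RR)$ of the linearized right-hand side to the Gaussian process $G$ with covariance $\Sigma$.

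For the finite-dimensional distributions I would fix $t_1, \dots, t_m \in \RR$ and apply the Cram\'er--Wold device, reducing the problem to a scalar Lindeberg CLT for the independent, bounded, centered summands $\sqrt{n}\,\weight\,\xi_\k$, where $\xi_\k$ is a linear combination of the $\mathbbm{1}\{\ve_\k \le t_j\} - \cdf(t_j) + \ve_\k \dens(t_j)$. The Lindeberg condition is immediate from $\max_\k \weight = O(n^{-1})$ and $\mathbb{E}|\ve|^\kappa < \infty$. The limiting covariance then comes out by expanding
\begin{equation*}
\mathrm{Cov}\bigl(\mathbbm{1}\{\ve \le t\} + \ve\,\dens(t),\; \mathbbm{1}\{\ve \le \tilde t\} + \ve\,\dens(\tilde t)\bigr)
\end{equation*}
into four terms (yielding exactly the bracket in $\Sigma$) and inserting the Riemann-sum limit $n \sum_\k \weight^2 \to 8\pi^2/3$, which is elementary from $\weight = \lambda(\GSk)$ and Assumption \ref{assumption1}.

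For tightness in $\ell^\infty(\RR)$ I would split the linearization into a weighted empirical part $U_n(t) := \sqrt{n}\sum_\k \weight(\mathbbm{1}\{\ve_\k \le t\} - \cdf(t))$ and a linear part $V_n(t) := \dens(t) \cdot \sqrt{n} \sum_\k \weight\,\ve_\k$. Since $V_n$ is simply the continuous bounded function $\dens$ times the scalar random variable $\sqrt{n}\sum_\k \weight \ve_\k$ (tight by the ordinary CLT), its tightness in the sup norm is immediate. For $U_n$ the indicator class $\{\mathbbm{1}\{\cdot \le t\}: t \in \RR\}$ has finite bracketing entropy under any probability law, and I would invoke a triangular-array empirical process CLT (e.g.\ Theorem~2.11.9 of van der Vaart and Wellner, 1996), verifying the Lindeberg-type condition using $\max_\k \weight = O(n^{-1})$ and $n\sum_\k \weight^2 = O(1)$. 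Combining finite-dimensional convergence with tightness yields the claim.

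The main technical obstacle is the tightness of $U_n$: although the indicator class is Donsker for i.i.d.\ data under equal weights, here we face a triangular array with inhomogeneous deterministic weights, so the standard stochastic equicontinuity arguments must be carried through under the weighted pseudometric $\rho_n^2(t,\tilde t) \asymp (n\sum_\k \weight^2)|\cdf(t) - \cdf(\tilde t)|$. The negligibility of individual summands (from $\max_\k \weight = O(n^{-1})$) combined with finite bracketing entropy renders this routine but somewhat technical to write out carefully.
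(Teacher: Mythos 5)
Your proposal follows essentially the same route as the paper: Theorem \ref{theorem2} reduces the problem to the linearized process $\sum_{\k\in\K}\sqrt{n}\,\weight\lbr\mathbbm{1}\{\ve_\k\le t\}-\F(t)+\ve_\k\dens(t)\rbr$, and the paper then simply invokes a functional central limit theorem for triangular arrays from \cite{N2006} --- which is precisely the statement you sketch a proof of via Cram\'er--Wold for the finite-dimensional distributions and a bracketing-entropy equicontinuity argument for tightness. One caveat on the single step you defer as ``elementary'': with $\weight=\lambda(\GSk)\approx\tfrac{4}{\pi n}\sqrt{1-s_{k_1}^2}$, the Riemann sum gives $n\sum_{\k\in\K}\weight^2\to\tfrac{16}{\pi^2}\int_0^1(1-s^2)\,ds=\tfrac{32}{3\pi^2}$ rather than $\tfrac{8\pi^2}{3}$, so you should actually carry out this computation rather than assert it; the discrepancy appears to originate in the paper's stated covariance (whose proof never computes the constant) and not in your method.
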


\medskip
\medskip

{\bf Acknowledgements}
This work has been supported in part by the Collaborative Research
Center ``Statistical modeling of nonlinear dynamic processes'' (SFB
823, Project C1) of the German Research Foundation (DFG) and
 the Bundesministerium f\"ur Bildung und Forschung through
the project ``MED4D: Dynamic medical imaging: Modeling and analysis of
medical data for improved diagnosis, supervision and drug
development''.

\appendix

\section{Proofs and technical details} \label{proofsanddetails}

\numberwithin{equation}{section}
Throughout our calculations $C$ will denote a positive constant, which may differ from
 line to line. The dependence of  $C$ on other parameters will be highlighted in the
 specific context.

\subsection{Proof of Lemma \ref{lemma1}}
 We begin with an auxiliary result which provides  an approximation
 rate for Lemma \ref{lemma1} in expectation of $\rpN$ for $\rp$ and is
 proven in Appendix B (see section \ref{ApB3}).

 \begin{proposition} \label{proposition4}
 Suppose that $\funk \in \smoothoc$ for $v\ge 5$ and that Assumption \ref{assumption1}
 holds. Then for all $(l,m) \in \mathcal{N}$ it follows that
\begin{equation}
    |\mathbb{E}\rpN-\rp| \le C m^5 n^{-1}
\end{equation}
 where $C>0$ is some constant depending on $\funk$ and $C_1$ (the constant from
 Assumption \ref{assumption1}).
\end{proposition}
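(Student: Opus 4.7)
My plan is to recognize $\mathbb{E}\rpN - \rp$ as a product quadrature error and bound it cell by cell. Since $w_\k = \lambda(\GSk)$, $\mathbb{E}\ve_\k = 0$ and the cells $\{\GSk\}_{\k\in\K}$ partition $\mathcal{D}$, we may write
\begin{equation*}
\mathbb{E}\rpN - \rp \;=\; \sum_{\k \in \K} \Bigl( w_\k\, h(\desk) - \int_{\GSk} h(s,\phi)\, d\lambda(s,\phi)\Bigr),
\end{equation*}
where $h(s,\phi) := \overline{\psi_{(l,m)}}(s,\phi)\, \R g(s,\phi)$. On $\GSk$ the integration is against the product measure $\tfrac{2}{\pi^2}\sqrt{1-s^2}\, ds\, d\phi$, so the rule is the tensor product of (i) the classical midpoint rule in $\phi$ on an interval of length $2\pi/p$ and (ii) a one-point weighted rule in $s$ with node $z_{k_1}^1$ chosen by \eqref{quadrature}. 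By construction this latter rule integrates every linear function of $s$ exactly against $\sqrt{1-s^2}\, ds$, while the midpoint rule is exact on linear functions of $\phi$.

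Next, a Taylor expansion of $h$ around $\desk$ separately in each variable, combined with the exactness of each one-dimensional sub-rule on affine functions, yields a standard cell-wise bound of the form
\begin{equation*}
\Bigl| w_\k h(\desk) - \int_{\GSk} h\, d\lambda\Bigr|\;\le\; \frac{C}{q\, p^{3}}\sup_{\GSk}|\partial_\phi^2 h| \;+\; \frac{C}{p\, q^{3}}\sup_{\GSk}|\partial_s^2 h|.
\end{equation*}
Summing over all $pq = n$ cells and invoking Assumption \ref{assumption1} ($p \asymp q \asymp \sqrt{n}$) collapses the two prefactors to $1/n$, giving the global bound
\begin{equation*}
|\mathbb{E}\rpN - \rp| \;\le\; \frac{C}{n}\bigl(\|\partial_\phi^2 h\|_\infty + \|\partial_s^2 h\|_\infty\bigr),
\end{equation*}
where $C$ depends on the constant $C_1$ from Assumption \ref{assumption1}.

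The remaining task is to quantify these two sup-norms of $h = U_m(s)\, e^{-il\phi}\, \R g(s,\phi)$. By Proposition \ref{proposition1}, the condition $v \ge 5$ forces $\R g \in C^{\lfloor (v-1)/2\rfloor}(\mathcal{D}) \subset C^2(\mathcal{D})$, so all partials of $\R g$ up to order two are uniformly bounded on the compact set $\mathcal{D}$ by a constant depending only on $g$. Each angular differentiation of $h$ produces a factor $il$ with $|l|\le m$ and preserves the radial factor $U_m$ with $\|U_m\|_\infty = m+1$; the Leibniz rule then gives $\|\partial_\phi^2 h\|_\infty \le C m^{3}$. For the radial derivatives I would appeal to Markov's inequality for polynomials on a compact interval to get $\|U_m'\|_\infty \le m^2(m+1)$ and, iterating, $\|U_m''\|_\infty \le C m^{5}$; Leibniz then yields $\|\partial_s^2 h\|_\infty \le C m^{5}$, which dominates the angular contribution and produces the claimed $C m^{5}/n$ bound.

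The main obstacle is tracking the sharp $m^{5}$ dependence coming from the second $s$-derivative of $\psi_{(l,m)}$; the quadrature error analysis itself is routine once one observes that the design point $z_{k_1}^1$ defined by \eqref{quadrature} is engineered precisely so that the single-node rule in the radial variable is exact on linear functions, which is the correct analogue of the midpoint rule in the $\sqrt{1-s^2}$-weighted setting and is what upgrades the cell-wise error from $O((1/q)(1/p))$ to the quadratic $O((1/p^2)+(1/q^2))$ needed to attain the $1/n$ rate.
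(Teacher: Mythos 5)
Your proposal is correct and takes essentially the same route as the paper: both read $\mathbb{E}\rpN-\rp$ as a cellwise quadrature error, use the fact that the design points $(z_{k_1}^1,z_{k_2}^2)$ make the one-point rule exact on affine functions (so the first-order Taylor terms vanish), and obtain the factor $m^5$ from the second $s$-derivative of $U_m$ — the paper via the explicit recurrence bounds of Proposition \ref{proposition3}, you via Markov's inequality, which gives the same order. The only small omission is that the cellwise Taylor remainder also contains the mixed term $(s-z_{k_1}^1)(\phi-z_{k_2}^2)\,\partial_s\partial_\phi h$, which the paper carries explicitly; it contributes only $\mathcal{O}(m^4 n^{-1})$ after summation and so changes nothing.
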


 We are now in a position to derive the decay rate of the bias
  postulated in Lemma \ref{lemma1}. The decay rate naturally splits up into two parts. One accounts for
 the average approximation error of Radon coefficients with index $m$ smaller than $t_n$ and the other for the error due to frequency limitation of the estimator.

The singular
 value decomposition of the normalized Radon transform in
 \eqref{svd} and the definition of our estimator (in \eqref{estimator}) yield
\begin{eqnarray}
    \label{lemma1bias}  \| \mathbb{E}\est-\funk \|_\infty  = \left\| \sum_{m = 0 }^{t_n}
    \sum_{l=-m}^m \sqrt{m+1}\varphi_{(l,m)} \lb \mathbb{E}  \rpN -  \rp \rb \right\|_\infty
    \le A_1 + A_2, \nonumber
\end{eqnarray}
where the terms $A_1$ and $A_2$ are given by
\begin{eqnarray}
    & &A_1 := \sum_{m = 0 }^{t_n} \sum_{l=-m}^m \sqrt{m+1} ~\left\|\varphi_{(l,m)}
    \right\|_\infty~ \la\mathbb{E}\rpN-\rp \ra   \nonumber \\[1ex]
    & & A_2 := \sum_{m > t_n } \sum_{l=-m}^m \sqrt{m+1} ~\|\varphi_{(l,m)}\|_\infty~\la \rp
    \ra. \nonumber
\end{eqnarray}
 For the term $A_1$ it follows that
\begin{eqnarray*}
    A_1 & \le & \sum_{m = 0 }^{t_n} \sum_{l=-m}^m (m+1) \la\mathbb{E}\rpN-\rp \ra \le
    \sum_{m=0}^{t_n} \sum_{l=-m}^m  (m+1)C m^5 n^{-1}  = \mathcal{O}\lb t_n^8 n^{-1} \rb,
\end{eqnarray*}
 where we have used that Proposition \ref{ApB2} in Appendix B implies the estimate
\begin{equation}
    \| \varphi_{(l,m)} \|_\infty \le \sqrt{m+1} \label{varphibound}
\end{equation}     in the first and the approximation result from Lemma
 \ref{proposition4} in the second inequality. Similarly we have

\begin{eqnarray*}
    A_2 & \le &     \sum_{m>t_n} \sum_{l=-m}^m (m+1) |\rp| \le \sum_{m>t_n}
    \sum_{l=-m}^m (m+1)^v t_n^{1-v} |\rp| \\[1ex]
    &\le & t_n^{1-v}     \sum_{m = 0 }^\infty\sum_{l=-m}^m (m+1)^v  |\rp|  =
    \mathcal{O}\lb t_n^{1-v}\rb.
\end{eqnarray*}
 In the last step we have used that $\funk$ complies to the smoothness  condition of $\smoothoc$
 (see \eqref{ellipsoidondition}) and thus the series converges. \qed

\subsection{Proof of Lemma \ref{lemma2}}
 We first rewrite $\est-\mathbb{E}\est$ employing \eqref{R_N} and \eqref{varphibound}
\begin{eqnarray} \nonumber
    \label{biasterm} \Big\| \est(z)-\mathbb{E}\est(z)\Big\|_\infty &=&\Big\| \sum_{m=0}^{t_n}
    \sum_{l=-m}^m \sqrt{m+1} \varphi_{(l,m)} \lb \rpN-\mathbb{E}\rpN\rb \Big\|_\infty
    \nonumber\\[1ex]
    & = & \Big\| \sum_{m=0}^{t_n}  \sum_{l=-m}^m \sqrt{m+1} \varphi_{(l,m)} \lb \sum_{\k
    \in \K} \overline{\psi_{(l,m)}}(\desk) \weight \varepsilon_\k \rb \Big\|_\infty \nonumber \\[1ex]
    & \le & \sum_{m=0}^{t_n}  \sum_{l=-m}^m (m+1)~ \Big| \sum_{\k \in \K}
    \overline{\psi_{(l,m)}}(\desk) \weight \varepsilon_\mathbf{k} \Big| \nonumber \\[1ex]
    & = & \sum_{m=0}^{t_n}  \sum_{l=-m}^m (m+1)^2~ \Big| \sum_{\k \in \K}
    \overline{\psi_{(l,m)}}(\desk) ~(m+1)^{-1} \weight \varepsilon_\mathbf{k} \Big| \nonumber \\[1ex]
    & \le & \sum_{m =0}^{t_n}  2(m+1)^3   \max_{\substack{(l,m) \in \mathcal{N} \\ m \le
    t_n}} \Big| \sum_{\k \in \K} \overline{\psi_{(l,m)}}(\desk)~(m+1)^{-1} \weight \varepsilon_\k
    \Big| \nonumber\\[1ex]
    & \le & C t_n^4 \max_{\substack{(l,m) \in \mathcal{N} \\ m \le t_n}} \Big| \sum_{\k \in
    \K} \overline{\psi_{(l,m)}}(\desk)~(m+1)^{-1} \weight \varepsilon_\k \Big| \nonumber
\end{eqnarray}
 We proceed deriving an upper bound for the maximum. For this purpose we introduce a truncation
 parameter $\trunc:=n^{1/2} \log(n)^{-1/2}$ and define the truncated error
\begin{equation}
    \truncres := \mathbbm{1}\left\{ \la \varepsilon_\k \ra \le \trunc
    \right\} \varepsilon_\k. \label{epsnot}
\end{equation}
 We will now show that all of the errors $\varepsilon_\k$
 with $\k \in \K$ eventually equal their truncated versions $\truncres$ almost surely.
 Via Markov's inequality we conclude that
\begin{equation*}
    \mathbb{P}\lb \la \varepsilon_\k \ra > \trunc\rb \le \mathbb{E}\lbr \la \varepsilon
    \ra ^\kappa \rbr \trunc^{-\kappa}
\end{equation*}
 and therewith it follows that
\begin{equation*}
    \sum_{n=pq} \mathbb{P}\lb \exists
    \k \in \K: \truncres \neq \varepsilon_\k  \rb= \sum_{n=pq} \mathbb{P}\lb \exists
    \k \in \K:  \la \varepsilon_\k \ra > \trunc \rb \le \sum_{n=pq} n \trunc^{-\kappa}
    \mathbb{E}\lbr \la \varepsilon \ra^\kappa \rbr.
\end{equation*}
 Recalling that $n=pq$ and that there exists some $C_2>0$ such that $p \le C_2 q$ by
 Assumption \ref{assumption1}, we derive
\begin{equation*}
    C \sum_{n=pq} n \trunc^{-\kappa} =C \sum_{n=pq} n^{1-\kappa/2}  \log(n)^{\kappa/2} \le C
    \sum_{q \ge 1} q^{2-\kappa} \log(C_2 q^2)^{\kappa/2}<\infty.
\end{equation*}
 Summability is entailed by $2-\kappa<-1$.
 The Borel-Cantelli Lemma implies that almost surely eventually all measurement errors and their truncated
 versions are equal. Thus we can confine ourselves to the maximum
\begin{eqnarray}
    & &\max_{\substack{(l,m) \in \mathcal{N} \\ m \le t_n}} \Big| \sum_{\k \in \K}
    \overline{\psi_{(l,m)}}(\desk)~(m+1)^{-1} \weight \truncres \Big| \le B_1 +B_2, \label{centeredsum}
\end{eqnarray}
where $B_1$ and $B_2$ are defined by
\begin{eqnarray*}
    & &B_1:= \max_{\substack{(l,m) \in \mathcal{N} \\ m \le t_n}} \Big| \sum_{\k \in \K}
    \overline{\psi_{(l,m)}}(\desk)~(m+1)^{-1} \weight \lbr\truncres-\mathbb{E}\truncres\rbr \Big| \\[1ex]
    & & B_2:=    \sum_{\k \in \K} \la \overline{\psi_{(l,m)}}(\desk) \ra ~(m+1)^{-1} \weight \la  \mathbb{E}
\truncres \ra .
\end{eqnarray*}
 Using the inequality
\begin{equation}
    \label{psibound} \| \psi_{(l,m)} \|_\infty \le m+1,
 \end{equation} which is a consequence of Proposition \ref{proposition3}, it follows that
\begin{equation*}
    B_2     \le  \la  \mathbb{E} \varepsilon^{\trunc} \ra  \sum_{\k \in \K}
    \weight = \mathcal{O}
    (n^{-1/2}),
\end{equation*}
 wherewe exploit the decay rate
 $\la  \mathbb{E} \varepsilon^{\trunc} \ra  = \mathcal{O}(n^{-1/2})$ in the last estimate .
 For the proof of this fact we recall the notation \eqref{epsnot} and note that the condition
 $\mathbb{E} \varepsilon=0$ implies
\begin{equation*}
    |\mathbb{E}\varepsilon^{\trunc}| =\la \mathbb{E} \lbr \varepsilon \mathbbm{1}\{|\varepsilon| >\trunc\}  \rbr \ra \le
    \int_{\trunc }^\infty \mathbb{P} \lb \la \varepsilon \ra  >s\rb ds \le \mathbb{E}
    \lbr \la \varepsilon \ra^\kappa \rbr (\kappa-1)^{-1} \trunc^{1-\kappa}=
    \mathcal{O}\lb n^{-1/2} \rb.
\end{equation*}
 For the term $B_1$ we note that for a fixed constant $C^\star$
\begin{eqnarray}
    & & \mathbb{P}\lb \la B_1 \ra > \log(n)^{1/2}n^{-1/2} C^\star \rb \label{unionboundeq} \\[1ex]
    & \le & t_n^2 \max_{\substack{(l,m) \in \mathcal{N} \\ m \le t_n}}\mathbb{P}\lb  \la
    \sum_{\k \in \K} \overline{\psi_{(l,m)}}(\desk)~(m+1)^{-1} \weight \lbr\truncres-\mathbb{E}
    \truncres\rbr \ra > \log(n)^{1/2}n^{-1/2} C^\star \rb. \nonumber
\end{eqnarray}
 Due to truncation $|\varepsilon^{\trunc}_\k-\mathbb{E}\varepsilon^{\trunc}_\k|$ is
 bounded by $2 \trunc$ and its variance by $ \sigma^2$. Furthermore the weights are uniformly of order $\mathcal{O}(n^{-1})$, since
\begin{equation*}
    \max_{\k \in \K} \weight= 2\pi^{-2}  \max_{\k \in \K} \int_{2 \pi k_2/q}^{2 \pi
    (k_2+1)/q} \int_{k_1/p}^{(k_1+1)/p} \sqrt{1-s^2}ds~ d\phi
    \le 4 (\pi pq)^{-1}=4 (\pi n)^{-1}.
\end{equation*}
 Consequently the Bernstein inequality yields for the right
 side of \eqref{unionboundeq} the upper bound
\begin{eqnarray*}
    & &t_n^2 \exp\Big(-\frac{ C C^\star \log(n)/n}{ n^{-1}+ \log(n)^{\frac{1}{2}}\trunc/
    n^{\frac{3}{2}} } \Big) \le  t_n^2 \exp\lb -C C^\star \log(n) \rb \le
    t_n^2 n^{-C C^\star},
\end{eqnarray*}
 which is summable for sufficiently large $C^\star$. The Borel-Cantelli Lemma therefore implies
 that
\begin{equation*}
    \max_{\substack{(l,m) \in \mathcal{N} \\ m \le t_n}} \Big| \sum_{\k \in \K}
    \overline{\psi_{(l,m)}}(\desk)~(m+1)^{-1} \weight \lbr\truncres-\mathbb{E}\truncres\rbr \Big| =
    \mathcal{O}\lb \log(n)^{1/2}n^{-1/2} \rb ~~~a.s.
\end{equation*}
 Combining these estimates we see, that the left side of \eqref{centeredsum} is almost
 surely of order 
 \\ $\mathcal{O}(\log(n)^{1/2}n^{-1/2})$. Consequently the right side
 of \eqref{biasterm} is of order $\mathcal{O}(t_n^4 \log(n)^{1/2}n^{-1/2})$ almost
 surely, which proves the assertion. \qed

\subsection{Proof of Theorem \ref{theorem1}}\label{secA3}

 Combining Lemma \ref{lemma1} and Lemma \ref{lemma2} yields
  the first part of Theorem \ref{theorem1}, when the truncation parameter
 is chosen as in \eqref{truncationparameter}. For the proof of the second
 property  we note the identity

\begin{eqnarray*}
    & & \lbp \mathcal{R}\lbr\funk-\est\rbr, \psi_{(l,m)}  \rbp_{\mathcal{L}^2
    (\mathcal{D}, \lambda)}=  \rp- \rpN \mathbbm{1}\{m \le t_n\},
\end{eqnarray*}
which gives for the left hand side of \eqref{ellipsedecay}
\begin{eqnarray}
    & & \sum_{m=0}^\infty \sum_{l=-m}^m m^\tau \la \lbp
    \mathcal{R}\lbr\funk-\est\rbr , \psi_{(l,m)} \rbp_{\mathcal{L}^2(\mathcal{D}, \lambda)} \ra
    \label{thm1eq1} = \sum_{m=0}^\infty \sum_{l=-m}^m m^\tau
    \la \rp- \rpN \mathbbm{1}\{m \le t_n\} \ra \\[1ex]
    & \le & D_1+D_2+D_3. \nonumber
\end{eqnarray}
The terms $D_1$, $D_2$ and $D_3$ are defined as follows:
\begin{eqnarray*}
 D_1& :=& \sum_{m =0}^{t_n} \sum_{l=-m}^m m^\tau \la \rp -\mathbb{E}\rpN \ra \\[1ex]
 D_2&:=&\sum_{m =0}^{t_n} \sum_{l=-m}^m m^\tau \la \rpN -\mathbb{E}\rpN
    \ra \\[1ex]
    D_3& := &\sum_{m >t_n}  \sum_{l=-m}^m m^\tau \la \rp\ra.
\end{eqnarray*}
By Proposition \ref{proposition4}  we receive the
 upper bound
\begin{equation*}
    D_1    \le \sum_{m =0}    ^{t_n} \sum_{l=-m}^m C m^{\tau+5}n^{-1}
    =\mathcal{O}\lb t_n^{\tau+7}n^{-1}\rb.
\end{equation*}
  For the second sum on right of \eqref{thm1eq1}  we use the estimate
\begin{eqnarray*}
    & & D_2 =     \sum_{m =0}^{t_n} \sum_{l=-m}^m m^{\tau}(m+1) \lbr (m+1)^{-1} \la \rpN -\mathbb{E}\rpN
    \ra \rbr \\[1ex]
    & \le & C  t_n^{\tau+3} \max_{\substack{(l,m) \in \mathcal{N} \\ m \le t_n}}
    \left\{ (m+1)^{-1} \la \rpN -\mathbb{E}\rpN \ra \right\} = \mathcal{O}\lb n^{-1/2}
    \log(n)^{1/2} t_n^{\tau+3} \rb~~a.s.
\end{eqnarray*}
 In the last equality we have used the following bound established in the proof of
 Lemma \ref{lemma2}:
\begin{eqnarray*}
    & & \max_{\substack{(l,m) \in \mathcal{N} \\ m \le t_n}}
    \left\{ (m+1)^{-1} \la \rpN -\mathbb{E}\rpN \ra \right\} \\[1ex]
    & = &\max_{\substack{(l,m) \in \mathcal{N} \\ m \le t_n}} \Big| (m+1)^{-1}\sum_{\k \in \K}
    \overline{\psi_{(l,m)}}(\desk) \weight \varepsilon_\k \Big|
    = \mathcal{O}\lb n^{-1/2}
    \log(n)^{1/2}\rb~~~a.s.
\end{eqnarray*}
The third term in \eqref{thm1eq1} can be bounded by
\begin{equation*}
    D_3 \le   \sum_{m >t_n}
    \sum_{l=-m}^m (m t_n)^{v-\tau} m^\tau  \la \rp\ra= t_n^{v-\tau} \sum_{m >t_n}
    \sum_{l=-m}^m m^v  \la \rp\ra.
\end{equation*}
Due to the smoothness condition in \eqref{ellipsoidondition} the double sum is finite.
 Since $t_n \to \infty$ it follows that the series converges to $0$ for $n \to \infty$.
 Consequently
\begin{equation*}
    t_n^{v-\tau} \sum_{m >t_n}  \sum_{l=-m}^m m^v  \la \rp\ra =
    o\lb t_n^{v-\tau} \rb
\end{equation*}
 and the definition of $t_n$ in \eqref{truncationparameter}
 yield the desired result. \qed

\subsection{Proof of Theorem \ref{theorem2} \label{secA4}}

 Proposition \ref{proposition5} is used to verify an equicontinuity
 argument, which is the central building block in the proof of Theorem \ref{theorem2}.
 For this purpose we define the $L^2_n$-bracketing number as follows:

\begin{definition} \label{L_2^Ndef}
 Let $Z_{1,n},...,Z_{n,n}$  be stochastic processes, indexed in  $\mathcal{F}$, and
 $\epsilon>0$.  The $L_2^n$-bracketing number of $\mathcal{F}$, denoted by
 $N_{[]}^{L_2^n}(\epsilon, \mathcal{F})$, is the minimal number $N_\epsilon$ of sets
 $\mathcal{F}_{\epsilon, j}^n$ in a partition of
 $\mathcal{F}= \bigcup_{j=1}^{N_\epsilon} \mathcal{F}_{\epsilon, j}^n$  such that for
 each $j$
\begin{equation}
    \sum_{i=1}^n \mathbb{E} \Big[ \sup_{f,\tilde{f} \in \mathcal{F}_{\epsilon, j}^n}
    \lb Z_{i,n}(f) -Z_{i,n}(\tilde{f})\rb^2 \Big] \le \epsilon^2. \label{L_2^N}
\end{equation}
\end{definition}

\begin{lemma} \label{lemma3}
Define
\begin{eqnarray}
    M_n(t) :=   \bigg|  \sum_{\k \in \K} \weight \Big\{ \mathbbm{1} \{ \resestk \le t\} -
    \mathbbm{1}\{\varepsilon_\mathbf{k} \le t\} + \cdf(t)   - \mathbb{P}\lb
    \hat{\varepsilon}_\mathbf{k}\le t \rb \Big\} \bigg|.\label{MNdef}
\end{eqnarray}
 Then, under the assumptions of Theorem \ref{theorem2} it follows that
 $\sup_{t \in \mathbb{R}}|M_N(t)|=o_P(n^{-1/2})$.
 \end{lemma}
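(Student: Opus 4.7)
The plan is to recast $M_n(t)$ as the value at $h=\est-\funk$ of a centered empirical process $Z_n(t,h)$ indexed by $t\in\RR$ and deterministic functions $h$ in a smoothness class, and then to establish uniform stochastic equicontinuity $\sup_{t,h} |Z_n(t,h)|=o_P(n^{-1/2})$ via a chaining argument based on Proposition~\ref{proposition5}. Using the identity $\resestk = \varepsilon_\k + \R(\est-\funk)(\desk)$, I define for deterministic $h$
$$
Z_n(t,h) := \sum_{\k\in\K} \weight \left[ \mathbbm{1}\{\varepsilon_\k \le t+\R h(\desk)\} - \mathbbm{1}\{\varepsilon_\k \le t\} - \cdf(t+\R h(\desk)) + \cdf(t) \right].
$$
Up to a residual of order $o_P(n^{-1/2})$ arising from the discrepancy between $\mathbb{P}(\resestk\le t)$ and the plug-in centering $\cdf(t+\R(\est-\funk)(\desk))$ (handled by Theorem~\ref{theorem1} together with H\"older continuity of $\dens$), we have $M_n(t) \le |Z_n(t,\est-\funk)| + o_P(n^{-1/2})$, and it suffices to control the latter.

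Next, fix any $\tau\in(4,v)$, which is possible since $v>5$. Theorem~\ref{theorem1} implies that with probability tending to one the random function $\est-\funk$ belongs to
$$
\mathcal{H}_n := \Big\{h \in \smoothel2: \|h\|_\infty \le \delta_n\Big\},\quad \delta_n := C (\log(n)/n)^{(v-1)/(2(v+3))},
$$
for a suitable absolute constant $C$. It therefore suffices to prove $\sup_{t\in\RR,\,h\in\mathcal{H}_n}|Z_n(t,h)|=o_P(n^{-1/2})$.

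The bulk of the work is a bracketing chaining argument. For fixed $(t,h)$, each summand of $Z_n(t,h)$ is a centered independent random variable bounded by $2\weight = O(n^{-1})$, and since $\|\R h\|_\infty\le\|h\|_\infty\le\delta_n$, the boundedness of $\dens$ yields $\mathrm{Var}(Z_n(t,h))\le C\delta_n/n$. Bernstein's inequality then gives a pointwise tail at rate $\sqrt{\delta_n\log n/n}$. For the uniform bound, Proposition~\ref{proposition5} provides polynomial bracketing numbers $\log N_{[]}(\epsilon,\smoothel2,\|\cdot\|_\infty)\le C\epsilon^{-2/(\tau-t')}$ for any $t'\in(3,\tau)$; H\"older continuity of $\dens$ with exponent $\zeta>4/(v-1)$ then converts sup-norm brackets on $h$ into $L_2^n$-brackets for the indicator functions in Definition~\ref{L_2^Ndef}. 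Combined with a sufficiently fine partition of $\RR$ for the $t$-variable and a union bound, the chaining yields
$$
\sup_{t\in\RR,\,h\in\mathcal{H}_n}|Z_n(t,h)| = O_P\big(\sqrt{\delta_n}\cdot(\log n)^{\alpha}\cdot n^{-1/2}\big) = o_P(n^{-1/2}),
$$
since $\delta_n$ decays polynomially in $n$.

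The main obstacle is executing this chaining carefully: the choice $\tau>4$ is precisely what is needed for the entropy integral $\int_0^{\delta_n}\sqrt{\log N_{[]}(\epsilon)}\,d\epsilon$ to converge near zero, while the condition $\zeta>4/(v-1)$ on the H\"older exponent of $\dens$ is what permits a compatible translation of $\|\cdot\|_\infty$-brackets on $h$ into the $L_2^n$-brackets of Definition~\ref{L_2^Ndef}. Balancing these scales against the pointwise Bernstein rate, and verifying that the resulting uniform bound multiplied by $\sqrt{n}$ tends to zero, is the crux of the argument; once these bookkeeping details are in place, everything else is standard empirical process machinery.
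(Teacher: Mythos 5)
Your overall strategy coincides with the paper's: rewrite $M_n(t)$ via $\resestk=\varepsilon_\k+\R[\est-\funk](\desk)$ as a centered process indexed by $(t,h)$, replace the random $\est-\funk$ by a deterministic element of $\smoothel2$ using Theorem \ref{theorem1}, and prove uniform (asymptotic) equicontinuity through the bracketing bound of Proposition \ref{proposition5}. The paper packages the chaining step by verifying the four hypotheses of Lemma A.19 of \cite{N2006} for the processes $Z_{n,\k}(t,d)=n^{1/2}\weight(\mathbbm{1}\{\varepsilon_\k\le t+d(\desk)\}-\mathbbm{1}\{\varepsilon_\k\le t\})$ with respect to the semimetric \eqref{semimetric}, rather than running Bernstein plus chaining by hand, but that is a presentational difference only.

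There is, however, a genuine quantitative gap in your entropy bookkeeping. You claim that $\tau\in(4,v)$ suffices because $\int_0^{\delta}\epsilon^{-1/(\tau-t')}\,d\epsilon$ converges when $\tau-t'>1$. But the entropy that enters the chaining is the $L_2^n$-bracketing number of the \emph{indicator} class of Definition \ref{L_2^Ndef}, not the $\|\cdot\|_\infty$-bracketing number of $\smoothel2$ itself, and the conversion is not scale-preserving: since $\mathbb{E}\big[(\mathbbm{1}\{\varepsilon\le t+a\}-\mathbbm{1}\{\varepsilon\le t+b\})^2\big]=|\cdf(t+a)-\cdf(t+b)|\le\|\dens\|_\infty|a-b|$, an $\epsilon$-bracket in the $L_2^n$ sense requires an $\epsilon^2$-bracket of $h$ in sup-norm (this is exactly how the paper builds the partition $\mathcal{F}^n_{i,j,\epsilon}$ from $\epsilon^2$-brackets $[g_j^L,g_j^U]$). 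Hence $\log N_{[]}^{L_2^n}(\epsilon,\mathcal{F})\le C\epsilon^{-4/(\tau-t')}$, and convergence of $\int_0^{\delta_n}\sqrt{\log N_{[]}^{L_2^n}(\epsilon,\mathcal{F})}\,d\epsilon$ forces $\tau-t'>2$ with $t'>3$, i.e.\ $\tau>5$ --- which is precisely why Theorem \ref{theorem2} assumes $v>5$ strictly. With your $\tau\in(4,5]$ the entropy integral diverges and the chaining fails; the fix is simply to pick $\tau\in(5,v)$. Relatedly, the H\"older continuity of $\dens$ with exponent $\zeta>4/(v-1)$ plays no role in this bracket conversion (boundedness of $\dens$ suffices there); in the paper it is needed only in the companion linearization \eqref{prop6eq2} of Proposition \ref{proposition6}, to make the Taylor remainder $O(\|\est-\funk\|_\infty^{1+\zeta})=o(n^{-1/2})$.
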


\begin{proof}[Proof of Lemma A.1]
 \noindent Using the definition of the estimated residuals in \eqref{estimatedresidual}
 we have
\begin{eqnarray*}
    & &  \mathbbm{1} \{ \resestk \le t\} - \mathbbm{1}\{\varepsilon_\mathbf{k}
    \le t\} + \cdf(t)  -  \mathbb{P}\lb \resestk \le t \rb \\[1ex] & = &
    \mathbbm{1} \left\{ \varepsilon_\mathbf{k} \le t +  \mathcal{R}[\est-\funk]
    (\desk)  \right\} - \mathbbm{1} \{ \varepsilon_\mathbf{k} \le t\}+
    \mathbb{P}\lb \varepsilon \le t \rb  - \mathbb{P}\lb \varepsilon_\mathbf{k}
    \le t +  \mathcal{R}[\est-\funk](\desk) \rb.
\end{eqnarray*}
 As we have seen in Theorem \ref{theorem1}, the random function
 $d_n:=\R[\funk-\est]$ is eventually included in the  smoothness
 class $\R(\smoothel2)$ for every $\tau<v$. Since $v>5$ by assumption,
 we can also choose a $\tau>5$.
 Since $d_n$ is a complicated object, depending on all residuals, we
 replace it by general functions in $\R(\smoothel2)$ and
  prove a uniform result over $\smoothel2$. We thus define
 the stochastic processes
\begin{equation*}
    Z_{n, \mathbf{k}}(t,d):= n^{1/2} \weight  \lb \mathbbm{1} \left\{
    \varepsilon_\mathbf{k} \le t +  d(z_\mathbf{k})  \right\} -
    \mathbbm{1} \{ \varepsilon_\mathbf{k} \le t\} \rb,
\end{equation*}
 indexed in the space $\mathcal{F}:= \mathbb{R}\times \R(\smoothel2)$,
 equipped with  the semi metric
\begin{equation}
    \rho((t,d),(\tilde{t}, \tilde{d})):= \max \Big\{ \sup_{x \in [-1, 1]}\big\{|\cdf(t+x)
    -\cdf(\tilde{t}+x)|, \|d-\tilde{d}\|_\infty \big\}\Big\}. \label{semimetric}
\end{equation}
 Notice that for $\rho$ to be a semimetric the error density $\dens$ must have support
 $\mathbb{R}$, which is assumed at this point for the sake of simplicity. Furthermore
 recall the uniform order of the product
 $n^{1/2}\weight  = \mathcal{O}(n^{-1/2})$. To prove equicontinuity we have to
 show that for every sequence $\delta_n \downarrow 0$ and every
 $\epsilon>0$
\begin{equation}
    \mathbb{P}\Bigg( \sup_{\substack{ (t,d), (\tilde{t}, \tilde{d}) \in \mathcal{F} \\
    \rho((t,d), (\tilde{t}, \tilde{d})  )<\delta_n} } \la \sum_{\k \in \K}\lb
    Z_{n, \mathbf{k}}(t, d) - \mathbb{E} Z_{n, \mathbf{k}}(t, d)-Z_{n, \mathbf{k}}
    (\tilde{t}, \tilde{d}) + \mathbb{E} Z_{n, \mathbf{k}}(\tilde{t}, \tilde{d}) \Bigg)
    \ra > \epsilon\rb \to 0. \label{equicontinuity}
\end{equation}
 If \eqref{equicontinuity} holds, then the assertion of Lemma \ref{lemma3} can
 be shown as follows: Firstly note that we can derive a lower
 bound for the probability on the left hand side of \eqref{equicontinuity} by
\begin{eqnarray}
    & & \mathbb{P}\Big(\Big\{ \sup_{t \in \mathbb{R}} \big|\sum_{\k \in \K}
    Z_{n, \mathbf{k}}(t, d_n) - \mathbb{E} Z_{n, \mathbf{k}}(t, d_n)-
    Z_{n, \mathbf{k}}(t, 0) + \mathbb{E} Z_{n, \mathbf{k}}(t, 0)\big|>\epsilon
     \Big\} \label{lemmathm2est1}\\[1ex]
    & &~~ ~~~~\bigcap \Big\{ d_n \in \R(\smoothel2)\Big\}\bigcap
    \Big\{\sup_{t \in \mathbb{R}} \rho\Big( (t,0), (t, d_n) \Big) < \delta_n
    \Big\}\Big) \nonumber \\[1.5ex]
    &=  &   \mathbb{P}\Big( \Big\{\sup_{t \in \mathbb{R}} \la n^{1/2} M_n(t)
    \ra >\epsilon \Big\} \bigcap\Big\{ d_n\in \R(\smoothel2) \Big\}
    \bigcap \Big\{ \sup_{t \in \mathbb{R}}\rho \big( (t,0), (t, d_n) \big)
    < \delta_n \Big\} \Big). \nonumber
\end{eqnarray}
 By the second part of Theorem \ref{theorem1} we know that
    $$ \mathbb{P}\lb  d_n \in \R(\smoothel2) \rb \to 1,$$
 for $\tau < v$. Furthermore we notice that $\| \R d \|_\infty \le \| d \|_\infty$ for all continuous function $d$, which follows
 immediately from the definition of the Radon transform in
 \eqref{radontransform}. Combining this, with the upper bound
    $$\|\funk-\est\|_\infty=\mathcal{O}\Big(\Big(\frac{\log(n)}{n}\Big)
    ^{\frac{v-1}{2(v+3)}}\Big)$$
 from the first part of Theorem \ref{theorem1} yields
    $$\rho\lb (t,0), (t, d_n) \rb = \|\mathcal{R}[\est-\funk] \|_\infty \le
    \|\est-\funk\|_\infty = \mathcal{O}\Big(\Big(\frac{\log(n)}{n}\Big)
    ^{\frac{v-1}{2(v+3)}}\Big) ~~~a.s. $$
 such that for a sequence $\delta_n \downarrow 0$,
 say e.g. $\delta_n = \log(n)^{-1}$
    $$\mathbb{P}\Big( \sup_{t \in \mathbb{R}}\rho\lb (t,0), (t, d_n) \rb <
    \delta_n \Big) \to 1. $$
 Combining these considerations with the right side of
 \eqref{lemmathm2est1} yields that
 $n^{1/2}M_n(t)=o_P(1)$ uniformly in $t$, proving the Lemma provided that
 \eqref{equicontinuity} holds.\\

 This statement is a consequence of Lemma A.19 in \cite{N2006}
, which requires four regularity properties of the process under consideration. The rest
 of the proof consists in verifying these properties.

\begin{enumerate}
\item For all $\eta>0$ we have to show:
\begin{eqnarray*}
    \sum_{\k \in \K} \mathbb{E} \Big( \sup_{(t,d) \in \mathcal{F}} \la
    Z_{n, \mathbf{k}}(t,d)\ra \mathbbm{1} \left\{ \la Z_{n, \mathbf{k}}
    (t,d)\ra > \eta \right\} \Big) \to 0.
\end{eqnarray*}
 This is easy to see, since $|Z_{n,\mathbf{k}}| \le C n^{-1/2}$ (recall that
 $\max_\mathbf{k} \weight \le C n^{-1}$) and so the sum is equal to
 $0$ for all $n$ larger than some $n_0$.

\item For every sequence $\delta_n \downarrow 0$
\begin{eqnarray}
    \sup_{\rho((t,d),(\tilde{t}, \tilde{d}))< \delta_n} \Big| \sum_{\k \in \K}
    \mathbb{E}\Big[   \Big( Z_{n, \mathbf{k}}(t,d)- Z_{n, \mathbf{k}}(\tilde{t},
    \tilde{d}) \Big)^2 \Big] \Big| \to 0. \label{Neumeyercond2}
\end{eqnarray}
 Consider the expectation for some fixed but arbitrary $\k \in \K$
 which can be bounded uniformly as follows:
\begin{eqnarray*}
    &  &  \mathbb{E}\left[   \lb Z_{n, \mathbf{k}}(t,d)- Z_{n, \mathbf{k}}
    (\tilde{t},\tilde{d}) \rb^2 \right] \\[1ex] & \le &
    C n^{-1} \left[ |\cdf(t+d(z_\mathbf{k}))-\cdf(\tilde{t}+ \tilde{d}
    (z_\mathbf{k}))|+ |\cdf(t)-\cdf(\tilde{t})| \right] \\[1ex] & \le &
    C n^{-1} \Big[ |\cdf(t+d(z_\mathbf{k}))-\cdf(\tilde{t}+ \tilde{d}
    (z_\mathbf{k}))|+ \delta_n \Big] \\[1ex] & \le &
    C n^{-1} \Big[ |\cdf(t+d(z_\mathbf{k}))-\cdf(t+ \tilde{d}(z_\mathbf{k}))|
    + ~|\cdf(t+\tilde{d}(z_\mathbf{k}))-\cdf(\tilde{t}+ \tilde{d}(z_\mathbf{k}))|
    + \delta_n \Big].
\end{eqnarray*}
 All three terms inside the square brackets are uniformly of order
  $o(1)$. This can be shown as follows: An application of the mean value theorem
  demonstrates that the first term is a null sequence:
    $$ |\cdf(t+d(z_\mathbf{k}))-\cdf(t+ \tilde{d}(z_\mathbf{k}))| \le
    \|\dens\|_\infty \|d-\tilde{d}\|_\infty \le C \delta_n \to 0.$$
 The middle term is bounded by $\delta_n$ by definition of our semimetric
 $\rho$ in \eqref{semimetric}, when we consider that $\tilde{d}\in \R(\smoothel2)$
 and therefore $\|\tilde{d}\|_\infty\le1$. Consequently it is $o(1)$, as well as the
 last term by assumption.

\item Denoting the $L_2^n$-bracketing number, as given in Definition
 \ref{L_2^Ndef}, by $N_{[]}^{L_2^n}(\epsilon, \mathcal{F})$, the condition
 we have to check next is, that for every sequence $\delta_n \downarrow 0$:
\begin{equation}
    \int_0^{\delta_n} \sqrt{\log \big( N_{[]}^{L_2^n}(\epsilon, \mathcal{F})\big) }
    d\epsilon \to 0. \label{Neumeyercond3}
\end{equation}
 For the construction of an adequate partition of $\mathcal{F}$ satisfying
 \eqref{L_2^N}, consider the $\epsilon^2$-brackets
 $[g_j^L, g_j^U], ~j=1,...,J=\mathcal{O}(\exp(\epsilon^{4/(\smoothindex-t)}))$
 of $ \smoothel2 $,  where  $t>3$ such that $\smoothindex-t>2$ (note that $\smoothindex>5$ by assumption). The images of these brackets under $\mathcal{R}$ are
 simply $[\R g_j^L, \R g_j^U]$, due to monotonicity of the integral and they are
 still $\epsilon^2$-brackets, since $\R$ reduces $\| \cdot \|_\infty$-distance. As
 a consequence we receive $\epsilon^2$-brackets $[\R g_j^L, \R g_j^U]$ of the
 whole class $\R (\smoothel2)$.\\
 Additionally choose $y_{i}^L<y_{i}^U$ with $i=1,...,I=\mathcal{O}(\epsilon^{-2})$,
 such that the intervals $[y_{i}^L, y_{i}^U]$ form a partition of the real line
 (for infinite values we take the intervals to be half open), and such that  each
 interval has probability mass $\le \epsilon^2$. Then the sets
\begin{eqnarray}
    \mathcal{F}_{i, j, \epsilon}^n:=[y_{i}^L, y_{i}^U] \times [ \R g_j^L,  \R g_j^U]
\end{eqnarray}
form a partition of $\mathcal{F}$. Their number is of order
$\mathcal{O}(\exp(\epsilon^{4/(\smoothindex-t)})$, where we might have to
slightly shrink $t$ such that still $t>3$ and $\smoothindex-t>2$ hold. Now we have
to show that \eqref{L_2^N} holds, that is in the present case for an arbitrary
$ \mathcal{F}_{i, j, \epsilon}^n$
    $$ \sum_{\k \in \K} \mathbb{E} \Big[ \sup_{(t,d), (\tilde{t}, \tilde{d})
    \in \mathcal{F}_{i, j, \epsilon}^n} \lb Z_{n,\mathbf{k}}(t,d) -Z_{n,\mathbf{k}}
    (\tilde{t}, \tilde{d})\rb^2 \Big] \le \epsilon^2. $$
 In the subsequent calculation we define the expressions $\cdf(\pm \infty)$ and
 $\dens(\pm \infty)$ by taking the respective limits. The left side of the above
 inequality is bounded  by
\begin{eqnarray*}
    & & C n^{-1} \sum_{\k \in \K} \mathbb{E} \Big[ \sup_{(t,d)(\tilde{t}, \tilde{d})
    \in\mathcal{F}_{i, j, \epsilon}^n} \Big( \la \mathbbm{1}\{\varepsilon_\mathbf{k}
    \le t+d(z_\mathbf{k}) \}- \mathbbm{1}\{\varepsilon_\mathbf{k}\le \tilde{t}+
    \tilde{d}(z_\mathbf{k})\} \ra  \\[1ex]  & & ~~~~+~ \la \mathbbm{1}
    \{\varepsilon_\textbf{k} \le t\}-\mathbbm{1}\{\varepsilon_\textbf{k} \le
    \tilde{t}\}  \ra \Big)^2 \Big] \\[1ex]
    & \le & C n^{-1}\sum_{\k \in \K} \mathbb{E} \Big[ \la \mathbbm{1}
    \{\varepsilon_\mathbf{k}\le y_{i}^U+\R g_j^U(z_\mathbf{k}) \}- \mathbbm{1}
    \{\varepsilon_\mathbf{k}\le  y_{i}^L+\R g_j^L(z_\mathbf{k})\}\ra \\[1ex]
    &  &~~~~ +~\la \mathbbm{1}\{\varepsilon_\textbf{k} \le y_{i}^U\}-\mathbbm{1}
    \{\varepsilon_\textbf{k} \le y_{i}^L\}  \ra \Big] \\[1ex]
     & \le & C n^{-1} \sum_{\k \in \K} \Big[ |\cdf( y_{i}^U+\R g_j^U(z_\mathbf{k}))-
     \cdf(y_{i}^L+\R g_j^L(z_\mathbf{k}))|
    \\[1ex] & &~~~~ +~ C  \la \cdf \lb y_{i}^U \rb -\cdf \lb y_{i}^L \rb \ra  \Big]
    \\[1ex]  & \le &
    C n^{-1}\sum_{\k \in \K} |\cdf( y_{i}^U+\R g_j^U(z_\mathbf{k}))-\cdf(y_{i}^L
    +\R g_j^L(z_\mathbf{k}))|    + C\epsilon^2 \\[1ex]
    & \le & C n^{-1}\sum_{\k \in \K} \{ |\cdf( y_{i}^U+\R g_j^U(z_\mathbf{k}))-
    \cdf(y_{i}^L+\R g_j^U(z_\mathbf{k}))| \\[1ex]
    & & ~~~~ + ~|\cdf(y_{i}^L+\R g_j^U(z_\mathbf{k}))-\cdf(y_{i}^L+
    \R g_j^L(z_\mathbf{k}))|   \} + C\epsilon^2 \\[1ex]
    & \le & C \lb\epsilon^2 +\|\dens(y)\|_\infty ~\|\R g_j^U-
    \R g_j^L\|_\infty + \epsilon^2 \rb \le C \epsilon^2.
\end{eqnarray*}
 Replacing $\epsilon$ by $\epsilon  C^{-1/2}$ yields the desired result, without
 changing the rate of the upper bound $\mathcal{O}(\exp(\epsilon^{4/(\smoothindex-t)})$
 of the $L_2^n$-bracketing number. Thus the integral in \eqref{Neumeyercond3} converges
 since $\smoothindex-t>2$.

\item Finally we have to prove that $(\mathcal{F}, \rho)$ is  totally bounded. By definition
 $\rho$ is a maximum semimetric defined on the product space
 $\mathbb{R} \times \R(\smoothel2)$.  Hence it suffices to show that each of the  spaces
 $(\mathbb{R}, \rho_1)$, $(\R(\smoothel2), \rho_2)$ is totally bounded, where we define for $t, \tilde{t}\in \mathbb{R} $ and $d_1, d_2 \in \R(\smoothel2)$
\begin{eqnarray*}
 \rho_1(t,\tilde{t})    & := &\sup_{x \in [-1,1]}|\cdf(t+x)-\cdf( \tilde{t}+x)| ~~   \\[1ex]
    \rho_2(d_1, d_2)&   :=& \|d_1-d_2\|_\infty.
 \end{eqnarray*}
 We start with $\rho_1$ and demonstrate that for every $\epsilon>0$ we can find a finite number
 of $t_1,...,t_J \in \mathbb{R}$ such that for every $t \in \mathbb{R}$ there exists a $t_j$ such that
\begin{equation}
    \sup_{x \in [-1,1]}|\cdf(t+x)-\cdf(t_j+x)|\le \epsilon.
\end{equation}
 Let $M:= \max_{t \in \mathbb{R}}|\dens(t)|$ and $I$ be a closed interval with probability mass
 larger than $1-\epsilon$. Take an equidistant grid with maximal width $\epsilon/M$ of points $t_j$ for
 $j=1,...,J$ across $I$ (including the boundary points) and now let, for an
 arbitrary $t \in \mathbb{R}$ say $t_j$ be one of the closest points to $t$ of this grid. If $t \notin I$
 we choose a boundary point of $I$ and the result is immediate. If $t \in I$ we get by the mean value
 theorem:
\begin{equation*}
    |\cdf(t+x)-\cdf(t_j+x)|\le\epsilon \|\dens\|_\infty M^{-1}=\epsilon.
\end{equation*}
 For $\rho_2$ we recall that by our above observations for every $\epsilon>0$ the bracketing
 number of $\R(\smoothel2)$ with respect to the norm $\|\cdot\|_\infty$ is finite and thus in particular
 we have total boundedness.
\end{enumerate}

\noindent Having established these regularity properties, by Neumeyer's Lemma A.19 (2006) equicontinuity
follows which completes the proof of Lemma \ref{lemma3}.

\end{proof}

\noindent Besides Lemma \ref{lemma3} we require some additional approximation results for a proof of Theorem \ref{theorem2}.

\begin{proposition} \label{proposition6}
 Under the assumptions of Theorem \ref{theorem2} we have

\begin{equation}
    \sum_{\k \in \K} \weight \mathbb{P}\lb \resestk \le t \rb - \int_\mathcal{D}\cdf \lb t
    + \mathcal{R}[\est-\funk](z)\rb d\lambda(z)= o_P\lb n^{-1/2} \rb \label{prop6eq1}
\end{equation}

\begin{eqnarray}
     & & \int_\mathcal{D} \cdf(t+\mathcal{R}[\est - \funk](z)) d\lambda(z)-\cdf(t)
     \label{prop6eq2} -\dens(t) \int_\mathcal{D} \mathcal{R}[\est-\funk](z)
     d\lambda(z)   =   o_P\lb n^{-1/2}  \rb
\end{eqnarray}

\begin{equation}
    \int_\mathcal{D} \mathcal{R} [\est-\funk](z) d\lambda(z)-
    \sum_{\k \in \K}\varepsilon_
    \mathbf{k} \weight = o_P \lb n^{-1/2}  \rb \label{prop6eq3}
\end{equation}

\end{proposition}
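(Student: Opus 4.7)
My plan is to prove the three displays separately, using different mechanisms: (3) rests on the orthogonality structure of the SVD basis, (2) is a first-order Taylor expansion with H\"older remainder, and (1) is a quadrature estimate that exploits the specific design.

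For (3), the key fact is that $\{\psi_{(l,m)}\}$ is orthonormal in $\mathcal{L}^2(\mathcal{D}, \lambda)$ with $\psi_{(0,0)} \equiv 1$, so $\int_\mathcal{D} \psi_{(l,m)}\, d\lambda = \delta_{(l,m),(0,0)}$. Combining this with the SVD representations $\R\est = \sum_{m \le t_n, l} \psi_{(l,m)} \rpN$ and $\R\funk = \sum_{m,l} \psi_{(l,m)} \rp$ (Proposition \ref{proposition1}) collapses the integrations to the zero mode: $\int \R\est\, d\lambda = \hat R(0,0) = \sum_\k \weight Y_\k$ and $\int \R\funk\, d\lambda = R(0,0)$. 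Substituting $Y_\k = \R\funk(\desk) + \ve_\k$ gives
\begin{equation*}
    \int_\mathcal{D} \R[\est-\funk](z)\, d\lambda(z) - \sum_{\k \in \K} \weight \ve_\k
    = \sum_{\k \in \K} \weight \R\funk(\desk) - \int_\mathcal{D} \R\funk\, d\lambda,
\end{equation*}
whose right-hand side is a deterministic quadrature error for the smooth function $\R\funk \in C^{\lfloor(v-1)/2\rfloor}$ (Proposition \ref{proposition1}). Since the radial design points are chosen via \eqref{quadrature} to make the per-cell rule exact for linear integrands against $\sqrt{1-s^2}$, a cell-wise second-order Taylor argument yields error $O(1/n) = o(n^{-1/2})$.

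For (2), H\"older continuity of $\dens$ with exponent $\zeta$ yields $|\cdf(t+u) - \cdf(t) - \dens(t) u| \le C|u|^{1+\zeta}$ uniformly in $t$. Applying this with $u = \R[\est-\funk](z)$ and integrating against $\lambda$,
\begin{equation*}
    \Big| \int_\mathcal{D} \cdf(t + \R[\est-\funk](z))\, d\lambda(z) - \cdf(t) - \dens(t) \int_\mathcal{D} \R[\est-\funk]\, d\lambda \Big|
    \le C\, \|\R[\est-\funk]\|_\infty^{1+\zeta} \le C\, \|\est-\funk\|_\infty^{1+\zeta},
\end{equation*}
using $\|\R h\|_\infty \le \|h\|_\infty$, which is immediate from \eqref{radontransform}. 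Theorem \ref{theorem1} provides $\|\est-\funk\|_\infty = O_P\big((\log n / n)^{(v-1)/(2(v+3))}\big)$, and the assumption $\zeta > 4/(v-1)$ forces $(1+\zeta)(v-1)/(2(v+3)) > 1/2$, so the bound is $o_P(n^{-1/2})$.

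For (1), I read $\mathbb{P}(\resestk \le t)$ as the conditional probability $\cdf(t + \R[\est-\funk](\desk))$ given $\est$, i.e.\ the form $\mathbb{P}(\ve_\k \le t + \R[\est-\funk](\desk))$ appearing in the equicontinuity setup of Lemma \ref{lemma3}. Applying the Taylor/H\"older step of (2) to both the sum and the integral reduces the claim to
\begin{equation*}
    \dens(t) \Big[\sum_{\k \in \K} \weight \R[\est-\funk](\desk) - \int_\mathcal{D} \R[\est-\funk](z)\, d\lambda(z)\Big] = o_P(n^{-1/2}).
\end{equation*}
I expand $\R[\est-\funk] = \sum_{m \le t_n, l}(\rpN - \rp)\psi_{(l,m)} - \sum_{m > t_n, l} \rp\, \psi_{(l,m)}$ and analyze the quadrature error of each mode. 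For $l \neq 0$ with $|l| < p$, the equispaced $\phi$-grid produces $\sum_{k_2} e^{il \cdot 2\pi(k_2+1/2)/p} = 0$ exactly, so the contribution vanishes; the $(0,0)$-mode is trivially exact; and for $l = 0,\, m \ge 1$, the radial design \eqref{quadrature} cancels the linear term in a per-cell Taylor expansion of $U_m$, leaving a quadratic remainder of order $\|U_m''\|_\infty/n$, polynomial in $m$. Assumption \ref{assumption1} gives $t_n \ll p$, so $|l| \le m \le t_n < p$ and the angular cancellation applies throughout. Combining the mode-wise quadrature errors with the coefficient bounds $|\rpN - \rp| = O_P(m\log^{1/2}(n)\,n^{-1/2}) + O(m^5/n)$ from the proof of Lemma \ref{lemma2} and Proposition \ref{proposition4}, together with the tail bound from \eqref{ellipsoidondition}, yields total quadrature error $o_P(n^{-1/2})$. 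This quadrature step is the main obstacle: a crude bound via $\|\R[\est-\funk]\|_\infty$ would give only $O_P(n^{-(v-1)/(2(v+3))}) \gg n^{-1/2}$, so one genuinely needs the exact angular cancellation and the radial exactness on linear integrands to beat $n^{-1/2}$.
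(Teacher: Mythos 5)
Your treatments of \eqref{prop6eq3} and \eqref{prop6eq2} coincide with the paper's. For \eqref{prop6eq3} the paper likewise collapses the integral to the $(0,0)$-coefficient via $\psi_{(0,0)}\equiv 1$, identifies $\sum_{\k}\weight\ve_\k=\hat R(0,0)-\mathbb{E}[\hat R(0,0)]$ using that the errors are centred, and invokes the quadrature estimate of Proposition \ref{proposition4} to conclude $\mathbb{E}[\hat R(0,0)]-R(0,0)=O(n^{-1})$; your substitution $Y_\k=\R\funk(\desk)+\ve_\k$ arrives at the same deterministic quadrature error. For \eqref{prop6eq2} the paper only says ``similar arguments,'' and your H\"older--Taylor expansion, together with the check that $\zeta>4/(v-1)$ forces $(1+\zeta)(v-1)/(2(v+3))>1/2$, is exactly the computation the assumption on $\zeta$ is designed for.

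For \eqref{prop6eq1} you take a genuinely different and much heavier route. The paper writes the left-hand side as $\sum_{\k}\int_{\GSk}\big[\cdf(t+\R[\est-\funk](\desk))-\cdf(t+\R[\est-\funk](z))\big]\,d\lambda(z)$, applies the mean value theorem, and bounds the per-cell oscillation by $2\|\nabla\R[\est-\funk]\|_\infty\,\|z-\desk\|_2$; since the cell diameter is $O(n^{-1/2})$ by Assumption \ref{assumption1} and the gradient converges uniformly to zero (Corollary \ref{corollary1}), the whole expression is $o(n^{-1/2})$ with no cancellation at all. This refutes your closing claim that one ``genuinely needs'' the exact angular cancellation to beat $n^{-1/2}$: the crude bound you dismiss uses $\|\R[\est-\funk]\|_\infty$, whereas the relevant quantity is the per-cell oscillation, which carries an extra factor $O(n^{-1/2})$ from the mesh width. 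Your mode-by-mode quadrature argument can be pushed through, but the step you gloss over --- the tail $m>t_n$ of the expansion of $\R[\est-\funk]$, dispatched with ``the tail bound from \eqref{ellipsoidondition}'' --- is a genuine gap as written: the naive estimate $\sum_{m>t_n}\sum_l(m+1)|\rp|=O(t_n^{-(v-1)})$ is never $o(n^{-1/2})$, since that would require $(v-1)/(2(v+3))>1/2$. To repair it you must carry the cancellation structure into the tail as well: exact angular cancellation for $0<|l|<p$, the $O(m^5n^{-1})$ radial quadrature bound against $|R(0,m)|$ for $l=0$, and the observation that $|l|\ge p$ forces $m\gtrsim\sqrt{n}$ so the smoothness condition annihilates those terms. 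The paper's gradient argument sidesteps all of this.
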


\begin{proof}[Proof of Proposition \ref{proposition6}]
 Recalling the definitions of the estimated residuals
 $\resestk:=\varepsilon_\mathbf{k}-\mathcal{R}[\est-\funk](\desk)$ and the
 weights $\weight:=\lambda(\GSk)$, we begin by
 rewriting the left side of  \eqref{prop6eq1}
\begin{equation*}
    \sum_{\k \in \K} \int_{\GSk} \cdf \lb t+\mathcal{R}[\est-\funk](\desk) \rb - \cdf
    \lb t+\mathcal{R}[\est-\funk](z) \rb d\lambda(z).
\end{equation*}
 According to the mean value theorem the absolute of this term is bounded by
\begin{equation*}
    \sum_{\k \in \K}  \int_{\GSk} \la \left\{ \mathcal{R}[\est-\funk](\desk) -
    \mathcal{R}[\est-\funk](z)\right\} \dens(t_z) \ra d\lambda(z),
\end{equation*}
 where  $t_z$ is some suitable point between $t+\mathcal{R}[\est-\funk](z)$ and
 $t+\mathcal{R}[\est-\funk](\desk)$. Since the density is bounded it suffices to
 show that
\begin{equation*}
    \sup_{z \in \GSk}\la  \mathcal{R}[\est-\funk](\desk) -\mathcal{R}[\est-\funk](z)\ra
    = o_P(n^{-1/2} ).
\end{equation*}
 An application of Cauchy-Schwarz yields
\begin{equation*}
    \sup_{z \in \GSk}\la  \mathcal{R}[\est-\funk](\desk) -\mathcal{R}[\est-\funk](z)\ra
    \le 2 \| \nabla \mathcal{R}[\est-\funk]\|_\infty \|z-\desk\|_2.
\end{equation*}
 By Assumption \ref{assumption1} $\|z-\desk\|_2 = \mathcal{O}(1/\sqrt{n})$.
  Moreover by
 Corollary \ref{corollary2} the gradient $\nabla \mathcal{R}[\est-\funk]$ converges
 uniformly to $0$. Thus we get the desired result. The estimate \eqref{prop6eq2} follows by similar
 arguments, while
 \eqref{prop6eq3} is based on two observations. Firstly, since
 $\psi_{(0,0)} =1$ we can rewrite the integral
\begin{equation*}
    \int_\mathcal{D} \mathcal{R} [\est-\funk](z) d\lambda(z)= \lbp
    \mathcal{R} [\est-\funk], \psi_{(0,0)}\rbp_{\mathcal{L}^2(\mathcal{D}, \lambda)}= \hat{R}(0,0)
     - R(0,0).
\end{equation*}
 Secondly as the errors are centered
\begin{equation*}
    \sum_{\k \in \K} \weight \varepsilon_\k= \hat{R}(0,0)-\mathbb{E}\Big[
    \hat{R}(0,0) \Big].
\end{equation*}
 Combining these results yields the  representation
 $\mathbb{E}[ \hat{R}(0,0) ]-R(0,0)$ for the left side of \eqref{prop6eq3}.
 By Proposition \ref{proposition4} this difference is of order $\mathcal{O}(n^{-1})$.

\end{proof}

 Equipped with our observations in Lemma \ref{lemma3} and Proposition \ref{proposition6}
 Theorem \ref{theorem2} is easily deduced:

\noindent \textbf{Proof of Theorem \ref{theorem2}}
 We apply the triangular inequality to arrive at the following decomposition
\begin{eqnarray*}
    & & \sup_{t \in \mathbb{R}} \Big| \sum_{\k \in \K} \weight\left[ \mathbbm{1}
    \{\resestk\le t\} -\mathbbm{1}\{\varepsilon_\mathbf{k}\le t\} -
    \varepsilon_\mathbf{k}\dens(t) \right] \Big|  \\[1ex]
    & \le & \sup_{t \in \mathbb{R}} \Big|  \sum_{\k \in \K} \weight \left[
    \mathbbm{1}\{\resestk\le t\} -\mathbbm{1}\{\varepsilon_\mathbf{k}\le t\} +
    \cdf(t)-\mathbb{P} \lb \resestk \le t \rb \right] \Big| \\[1ex]
    & + & \sup_{t \in \mathbb{R}} \Big| \sum_{\k \in \K} \weight \mathbb{P} \lb \resestk
    \le t \rb - \int_\mathcal{D}\cdf \lb t+\mathcal{R}[\est-\funk](z) \rb d\lambda(z)
    \Big| \\[1ex]
    & + & \sup_{t \in \mathbb{R}} \Big| \int_\mathcal{D} \cdf \lb t+\mathcal{R}
    [\est-\funk](z) \rb d\lambda(z) -\cdf(t) - \dens(t) \int_\mathcal{D}\mathcal{R}
    [\est-\funk](z) d\lambda(z) \Big| \\[1ex]
    & + & \sup_{t \in \mathbb{R}} \Big| f_\varepsilon(t) \int_\mathcal{D}\mathcal{R}
    [\est-\funk](z) d\lambda(z) - \sum_{\k \in \K} \weight \varepsilon_\mathbf{k}\dens(t)
    \Big|.
\end{eqnarray*}
 Each of the terms on the right side is of order $o_{P} \lb n^{-1/2} \rb$, the first one
 by Lemma \ref{lemma3} and the other ones by Proposition \ref{proposition6}. \qed \\

\subsection{Proof of Corollary \ref{corollary2}} This is a consequence of Theorem \ref{theorem2}, as we can
 represent the process $\sqrt{n}(\cdf-\estcdf)$ as  sum of independent stochastic
 processes and a negligible term:
\begin{eqnarray*}
    \sqrt{n}(\cdf-\estcdf)&  =& \sum_{\mathbf{k}\in \K} \sqrt{n} \weight \left\{ \mathbbm{1}
    \{\resestk \le t\} - \cdf(t) \right\}  \\[1ex]
    & =  & \sum_{\mathbf{k}\in \K} \sqrt{n} \weight \left\{ \mathbbm{1}\{\varepsilon_\mathbf{k}
    \le t\} - \cdf(t) + \varepsilon_\mathbf{k} \dens(t) \right\} + o_P(1).
\end{eqnarray*}
 The sum on the right side converges to a Gaussian process, by application of a
 functional CLT for triangular arrays found in \cite{N2006}.

\section{Auxiliary results} \label{ApB}

\subsection{Uniform bounds}
 We begin stating some frequently used properties of the radial polynomials  which are taken from
 \cite{BoWo1970} and \cite{Janssen:14}.

\begin{proposition} \label{Ap1}
\begin{enumerate}
    \item For all $(l,m)\in \mathcal{N}$
        \begin{equation}
            \sup_{0 \le r \le 1}|R_m^{|l|}(r)| = 1 .\label{Rad1}
        \end{equation}
    \item For all $(l,m), (l,m') \in \mathcal{N}$
        \begin{equation}
            \int_0^1 \sqrt{2(m+1)}R_m^{|l|}(r) \sqrt{2(m'+1)}R_{m'}^{|l|}(r) r ~dr
            = \delta_{m, m'}. \label{Rad2}
        \end{equation}
    \item For all $(l,m)\in \mathcal{N}$ the derivative of the corresponding
             radial polynomial has the
           following structure:
         \begin{eqnarray}
            \frac{d}{dr} R_m^{|l|}(r) &= & \sum_{j=0}^{\frac{1}{2}(m-1-|l-1|)}
            (m-2j)R_{m-1-2j}^{|l-1|}(r)  \label{Rad3}\\ &+& \sum_{j=0}^{\frac{1}{2}
            (m-1-|l+1|)}(m-2j)R_{m-1-2j}^{|l+1|}(r)\nonumber.
        \end{eqnarray}
\end{enumerate}
\end{proposition}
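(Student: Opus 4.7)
The plan is to reduce all three assertions to standard identities for Jacobi polynomials via the classical representation
\[
R_m^{|l|}(r) = (-1)^{(m-|l|)/2}\, r^{|l|}\, P_{(m-|l|)/2}^{(|l|,0)}(1-2r^2),
\]
which itself can be checked by comparing the coefficients of the explicit sum in the definition of $R_m^{|l|}$ to the Rodrigues-type expansion of $P_n^{(\alpha,\beta)}$. Once this bridge is established, the three statements become translations of well-known Jacobi polynomial facts, with only bookkeeping left to reconstruct the radial-polynomial form of the answers.

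For (1), I would first verify $R_m^{|l|}(1)=1$ by substituting $r=1$ in the defining sum and applying a Vandermonde-type identity on binomial coefficients; equivalently, this says $P_n^{(|l|,0)}(-1)=(-1)^n$. Next I would argue $|R_m^{|l|}(r)|\le 1$ on $[0,1]$ by noting that $|r^{|l|}|\le 1$ and that the Jacobi polynomial $P_n^{(\alpha,0)}$ with $\alpha\ge 0$ attains its maximum on $[-1,1]$ at the endpoint $x=-1$ (a standard consequence of its orthogonality weight $(1-x)^\alpha$ and of the Markov--Stieltjes/extremal bound $|P_n^{(\alpha,\beta)}(x)|\le \max(P_n^{(\alpha,\beta)}(1),|P_n^{(\alpha,\beta)}(-1)|)$ in this regime). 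Combining these two bounds gives the sup equal to 1.

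For (2), I would perform the substitution $x=1-2r^2$, yielding $r\,dr=-\tfrac{1}{4}dx$ and $r^{2|l|}=((1-x)/2)^{|l|}$. The integral in (2) then collapses to
\[
\frac{(-1)^{(m+m')/2-|l|}}{2^{|l|+2}}\int_{-1}^{1} P_{(m-|l|)/2}^{(|l|,0)}(x)\, P_{(m'-|l|)/2}^{(|l|,0)}(x)\,(1-x)^{|l|}\,dx,
\]
and the orthogonality and normalization of Jacobi polynomials on $[-1,1]$ with weight $(1-x)^{|l|}$ immediately produce a $\delta_{m,m'}$ with a proportionality constant that, once multiplied by $\sqrt{2(m+1)}\sqrt{2(m'+1)}$, exactly gives $1$.

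For (3), the natural route is to differentiate the Jacobi representation, apply the identity $\tfrac{d}{dx}P_n^{(\alpha,\beta)}(x)=\tfrac{1}{2}(n+\alpha+\beta+1)P_{n-1}^{(\alpha+1,\beta+1)}(x)$ together with the chain rule $\tfrac{d}{dr}(1-2r^2)=-4r$, and then reinterpret the resulting combinations through contiguous relations that shift the upper index of $P_n^{(\alpha,\beta)}$ by $\pm 1$. The two contiguous shifts naturally split into the two sums in (3), one involving $R_{m-1-2j}^{|l-1|}$ and the other $R_{m-1-2j}^{|l+1|}$. I expect this derivative identity to be the main obstacle: the boundedness in (1) and orthogonality in (2) are essentially direct, but (3) requires care with the index ranges and with the cases $l=0,\pm 1$ where one of the two contiguous Jacobi shifts degenerates; an alternative safer route is to differentiate the explicit polynomial series in the definition of $R_m^{|l|}$ term by term and to verify by matching coefficients that the resulting polynomial agrees with the stated sum, which avoids the need to track signs through the Jacobi transformation.
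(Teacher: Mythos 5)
First, note that the paper itself offers no proof of this proposition: it is stated as a collection of facts quoted from Born and Wolf (1970) and Janssen (2014), so your attempt can only be measured against the classical literature. Within that frame, your treatment of \eqref{Rad2} is correct and standard: the substitution $x=1-2r^2$ turns the integral into a Jacobi inner product with weight $(1-x)^{|l|}$, the parity constraint in $\mathcal{N}$ guarantees that $m\neq m'$ with the same $l$ gives distinct Jacobi degrees, and the normalization constant indeed comes out to $\int_0^1 (R_m^{|l|})^2\, r\,dr = 1/(2(m+1))$. For \eqref{Rad3}, your fallback of differentiating the defining sum and matching coefficients is the right instinct and does work (spot checks such as $\frac{d}{dr}R_3^1 = 9r^2-2 = (6r^2-2)+3r^2 = [3R_2^0+R_0^0]+3R_2^2$ confirm the two-sum structure), though as written it is a plan rather than a proof.

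The genuine gap is in part (1). Your upper bound $|R_m^{|l|}(r)|\le 1$ rests on the claim that $P_n^{(\alpha,0)}$ with $\alpha\ge 0$ attains its maximum modulus on $[-1,1]$ at $x=-1$, where it equals $(-1)^n$. This is false for $\alpha\ge 1$: by Szeg\H{o}'s extremal theorem the maximum of $|P_n^{(\alpha,\beta)}|$ on $[-1,1]$ is $\binom{n+q}{n}$ with $q=\max(\alpha,\beta)$, attained at $x=+1$ when $q=\alpha$; for instance $P_1^{(1,0)}(x)=\tfrac12(1+3x)$ takes the value $2$ at $x=1$ while $|P_1^{(1,0)}(-1)|=1$. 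Consequently the term-by-term estimate ``$|r^{|l|}|\le 1$ and $|P_n^{(|l|,0)}(1-2r^2)|\le 1$'' does not hold, and the product bound survives only because the Jacobi factor is large precisely where $r^{|l|}$ is small ($x=1$ corresponds to $r=0$). The statement $\sup_{0\le r\le 1}|R_m^{|l|}(r)|=1$ is true, but it is equivalent to the \emph{weighted} extremal inequality $\bigl(\tfrac{1-x}{2}\bigr)^{|l|/2}\,|P_n^{(|l|,0)}(x)|\le 1$ on $[-1,1]$, which is a genuinely different (and less elementary) fact than the endpoint-maximum property you invoked; it is usually obtained from integral or generating-function representations of the radial polynomials, or by induction on the recurrence relations, as in the cited sources. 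The evaluation $R_m^{|l|}(1)=1$ via $P_n^{(|l|,0)}(-1)=(-1)^n$ is fine, but as it stands the inequality half of \eqref{Rad1} does not go through.
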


 \noindent Next we provide upper bounds on the $\|\cdot\|_\infty$-norm of the derivatives
 of the Chebychev and radial polynomials. The bounds on the radial polynomials follow by
 the above Proposition and the bounds for the Chebychevs by identities from \cite{HaMa1986}.\\

\begin{proposition} \label{proposition3}
 Let $k \in \mathbb{N}_0$ and $(l,m) \in \mathcal{N}$, then
\begin{eqnarray*}
    \sup_{0 \le r \le 1} \la \frac{d^k}{dr^k} R_m^{|l|}( r) \ra \le  m^{2k}
\end{eqnarray*}
and
\begin{eqnarray*}
    \sup_{0 \le s \le 1} \la \frac{d^k}{d s^k} U_m(s) \ra \le  (m+1)m^{2k}.
\end{eqnarray*}

\end{proposition}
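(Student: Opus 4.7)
The plan is to deduce both bounds from the classical Markov brothers' inequality for algebraic polynomials on $[-1,1]$:
\[
 \sup_{x \in [-1,1]} \big| p^{(k)}(x) \big| \le T_n^{(k)}(1) \cdot \sup_{x \in [-1,1]} |p(x)|
\]
for every polynomial $p$ of degree $n$, combined with the explicit formula
\[
 T_n^{(k)}(1) = \frac{n^2 (n^2-1^2)(n^2-2^2)\cdots \big(n^2-(k-1)^2\big)}{(2k-1)!!} \le n^{2k}.
\]
Since the supremum over $[0,1]$ is trivially dominated by the supremum over $[-1,1]$, it suffices to work on $[-1,1]$ throughout and restrict at the end.

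For the first bound, observe that $R_m^{|l|}$ is a polynomial of degree $m$ in $r$ in which only monomials whose degree has the same parity as $m$ appear, so $R_m^{|l|}(-r) = (-1)^m R_m^{|l|}(r)$. Combined with Proposition \ref{Ap1}(1) this gives $\sup_{r \in [-1,1]} |R_m^{|l|}(r)| = \sup_{r \in [0,1]} |R_m^{|l|}(r)| = 1$, and Markov's inequality with $n = m$ then yields $\sup_{r \in [-1,1]} \big| \frac{d^k}{dr^k} R_m^{|l|}(r) \big| \le m^{2k}$. For the second bound, the trigonometric representation $U_m(\cos\theta) = \sin((m+1)\theta)/\sin\theta$, with the boundary values $U_m(\pm 1) = (\pm 1)^m (m+1)$ read off as limits $\theta \to 0,\pi$, shows that $\sup_{s \in [-1,1]} |U_m(s)| = m+1$, and Markov's inequality now produces $\sup_{s \in [-1,1]} |U_m^{(k)}(s)| \le m^{2k}(m+1)$, as required.

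An alternative — closer in spirit to Proposition \ref{Ap1}(3) — is to prove the radial-polynomial bound by induction on $k$ using the identity \eqref{Rad3}: each differentiation expresses $\frac{d}{dr} R_m^{|l|}$ as a sum of $O(m)$ polynomials of the form $R_{m-1-2j}^{|l\pm 1|}$ with coefficients of order $m$, so that the inductive hypothesis inflates the bound by a factor of order $m^2$ at each step. The main obstacle in this inductive route is to keep the multiplicative constant in $m^{2k}$ from growing with $k$; the Markov route avoids this entirely because the sharp constant $T_m^{(k)}(1)$ already contains the factor $(2k-1)!!$ in its denominator, which precisely compensates the naive $k$-fold iteration.
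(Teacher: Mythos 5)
Your proof is correct, but it takes a genuinely different route from the paper's. The paper proves both bounds by induction on $k$: for the radial polynomials it iterates the derivative identity \eqref{Rad3}, using that the coefficients $(m-2j)$ appearing there sum to at most (roughly) $m^2$, and for the Chebyshev polynomials it alternates between $\frac{d}{ds}T_m=mU_{m-1}$ and the expansion of $\frac{d}{ds}U_m$ in the $T_j$, gaining a factor of order $m^2$ per differentiation. You instead invoke the V.~A.~Markov inequality $\sup_{[-1,1]}|p^{(k)}|\le T_n^{(k)}(1)\,\sup_{[-1,1]}|p|$ together with $T_n^{(k)}(1)=\frac{n^2(n^2-1)\cdots(n^2-(k-1)^2)}{(2k-1)!!}\le n^{2k}$, which reduces everything to the $k=0$ suprema. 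You correctly handle the two points where this could go wrong: Proposition \ref{Ap1}(1) controls $R_m^{|l|}$ only on $[0,1]$ while Markov's inequality lives on $[-1,1]$, and you bridge this with the parity $R_m^{|l|}(-r)=(-1)^mR_m^{|l|}(r)$ (applying Markov directly on the half-interval would cost an extra $2^k$); and $\sup_{[-1,1]}|U_m|=m+1$ indeed follows from the trigonometric representation. Your route is shorter, needs the identities from \cite{BoWo1970} and \cite{HaMa1986} only at $k=0$, and even yields a sharper constant; what the paper's induction buys is self-containedness, relying solely on the recursions already recorded in Proposition \ref{Ap1}. Your closing remark somewhat overstates the difficulty of the inductive alternative: because the coefficients in \eqref{Rad3} sum to essentially $m^2$ (not $Cm^2$), the induction closes with constant $1$ and no $k$-dependent inflation actually occurs.
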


\begin{proof}[Proof of Proposition \ref{proposition3}]
 In order to show the first statement, we apply the identities \eqref{Rad1} and
 \eqref{Rad3} from Proposition \ref{Ap1} and use an induction argument. The initial step
 is given by \eqref{Rad1} and the induction hypothesis is
\begin{eqnarray*}
    \sup_{0 \le r \le 1} \la \frac{d^k}{dr^k} R_m^{|l|}( r) \ra \le  m^{2k}.
\end{eqnarray*}
By virtue of \eqref{Rad3} we have
\begin{eqnarray*}
    \la \frac{d^{k+1}}{dr^{k+1}}  R_m^{|l|}( r) \ra & = & \bigg| \sum_{j=0}^{\frac{1}{2}
    (m-1-|l-1|)}(m-2j) \frac{d^{k}}{dr^{k}}R_{m-1-2j}^{|l-1|}(r) \\[1ex]
    & & + \sum_{j=0}^{\frac{1}{2}(m-1-|l+1|)}(m-2j)\frac{d^{k}}{dr^{k}}R_{m-1-2j}^{|l+1|}
    (r) \bigg| \\[1ex]
    & \le & 2  \sum_{j=0}^{\frac{1}{2}(m-1-|l-1|)}(m-2j) m^{2k}  \le  m^{2k+2},
\end{eqnarray*}
 where we have used the induction hypothesis to bound the derivatives of
 $R_m^{|l|}$. \\[1ex]
 The case of the Chebychev polynomials is similar. In order to prove the second
 identity in Proposition \ref{proposition3} we cite a few well known facts about
 Chebychev polynomials from \cite{HaMa1986}
\begin{enumerate}
    \item For all $m \in \mathbb{N}$ $U_m$ is uniformly bounded by $m+1$.
    \item Let $T_m$ denote the Chebychev polynomial of the first kind, which satisfies the
          differential equation
        \begin{equation*}
            \frac{d}{ds}T_m(s)= U_{m-1}(s) m. \label{masonident1}
        \end{equation*}
           For all $m\in \mathbb{N}$ $T_m$ is uniformly bounded by $1$.
    \item For all $m \in \mathbb{N}$ the representation
        \begin{equation*}
            \frac{d}{ds} U_m(s) =  \sum_{j=0}^{\bullet m-2} \frac{(m^2-j^2)m}{m+1} T_j(s)
            \label{masonident2}
        \end{equation*}
         holds, where  $\bullet$ indicates that we only sum over such terms where $m-j$
         is even.
\end{enumerate}
 The proof now follows by an induction, analogous to that of the first part.
\end{proof}

\subsection{Proof of Proposition \ref{proposition1}} \label{ApB2}
 We employ these bounds to sketch a proof of Proposition \ref{proposition1}. The
 techniques are borrowed from the theory of Fourier series. It is
 well known that a continuous function $f$ on a compact interval, with absolutely
 summable Fourier coefficients is identical to its Fourier series $f_\infty$. This is
 most easily proven by observing that $f$ and $f_\infty$ are identical in mean and
 that by uniform  convergence  $f_\infty$ is also continuous. We proceed
 analogously for the proof  of the  identities \eqref{L2expansiong} - \eqref{inverse}.
 The differentiability is an immediate  consequence of this argument. To avoid redundancy we confine our investigation to equation \eqref{L2expansiong}.

Firstly we define the function on
 the right side of \eqref{L2expansiong} by $\tilde{g}$. Obviously
\begin{equation}
    \int_\mathcal{B}(\funk-\tilde{\funk})^2d\mu=0.  \label{integralnull}
\end{equation}

 \noindent As $\mu$ is absolutely continuous with respect to the Lebesgue measure
 the set $\{g=\tilde{g}\}$ has Lebesgue measure $0$ and thus \eqref{L2expansiong}
 follows if we can establish the  continuity of $\tilde{g}$ (recall that $\funk$ is
 continuous by assumption). Continuity of $\tilde{\funk}$ is implied by the uniform
 convergence of the sequence of continuous functions
\begin{equation}
    \Big( \sum_{m =0}^N \sum_{l=-m}^m \varphi_{(l,m)} \lbp  \funk, \varphi_{(l,m)}
    \rbp_{\mathcal{L}^2(\mathcal{B}, \mu)}\Big)_{N \in \mathbb{N}}
     \label{functionsequence}
\end{equation}
 to $\tilde{\funk}$ for $N \to \infty$.  To see this we consider the difference
\begin{eqnarray*}
    & & \Big\| \tilde{g}- \sum_{m =0}^N \sum_{l=-m}^m \varphi_{(l,m)} \lbp  \funk, \varphi_{(l,m)}
    \rbp_{\mathcal{L}^2(\mathcal{B}, \mu)} \Big\|_\infty =
    \Big\|  \sum_{m =N}^\infty \sum_{l=-m}^m \varphi_{(l,m)} \lbp  \funk, \varphi_{(l,m)}
    \rbp_{\mathcal{L}^2(\mathcal{B}, \mu)}  \Big\|_\infty \\[1ex]
    & \le &  \sum_{m =N}^\infty \sum_{l=-m}^m \Big\| \varphi_{(l,m)}\Big\|_\infty
    \la \lbp  \funk, \varphi_{(l,m)}
    \rbp_{\mathcal{L}^2(\mathcal{B}, \mu)}  \ra \le
    \sum_{m =N}^\infty \sum_{l=-m}^m \sqrt{m+1}  \big| \lbp  \funk, \varphi_{(l,m)}
    \rbp_{\mathcal{L}^2(\mathcal{B}, \mu)}  \big|
\end{eqnarray*}
 where we used \eqref{varphibound} in the last step. Plugging  the
 identity \eqref{inverse} (recall that we  already know it in an $\mathcal{L}^2$-sense from
 equation \eqref{inverseL2}) into the
 inner products yields
\begin{equation}
    \sum_{m =N}^\infty \sum_{l=-m}^m  \sqrt{m+1} \big| \lbp  \funk, \varphi_{(l,m)}
    \rbp_{\mathcal{L}^2(\mathcal{B}, \mu)}  \big| =  \sum_{m=N}^\infty \sum_{l=-m}^{ m}
    (m+1)\big|  \left<
    \R \funk, \psi_{(l,m)} \right>_{\mathcal{L}^2(\mathcal{D}, \lambda)}
    \big|.
\end{equation}
 By the series condition in \eqref{ellipsoidondition}, the right and thus the left side
 converge to $0$, which proves continuity of $\tilde{g}$. Consequently, it follows from
 \eqref{integralnull}, that $\funk=\tilde{g}$.

 To establish differentiability of $\funk$ and $\mathcal{R}\funk$ we use their
 $\mathcal{L}^2$-representations \eqref{L2expansiong} and  \eqref{L2expansionRg}.
 Differentiability and summation may be interchanged by uniformity arguments, using the
 bounds from Proposition \ref{proposition3}. Continuity of the derivatives is then
 derived as in the above argumentation.

\subsection{Proof of Proposition \ref{proposition4}} \label{ApB3} By definition of $\rpN$ in
\eqref{R_N} and the weights $\weight=\lambda(\GSk)$ we obtain:
\begin{eqnarray}
    \label{intgsk} \big| \rp -\mathbb{E}\rpN\big|  & \le & \sum_{\k \in \K}\la \int_{\GSk}
     \overline{\psi_{(l,m)}}(z)    \mathcal{R}\funk(z)- \overline{\psi_{(l,m)}}(z_\textbf{k})
     \mathcal{R}\funk(z_\textbf{k})      d\lambda(z)\ra. \nonumber \\[1ex]
    & \le & \sum_{\k \in \K} \la \int_{\GSk}  \mathfrak{Re}(\psi_{(l,m)}(z))
    \mathcal{R}\funk(z)-       \mathfrak{Re}(\psi_{(l,m)}(z_    \textbf{k}))\mathcal{R}
    \funk(z_\textbf{k}) d\lambda(z)\ra \nonumber\\[1ex]
    & +& \sum_{\k \in \K}\la \int_{\GSk} \mathfrak{Im}(\psi_{(l,m)}(z))\mathcal{R}
    \funk(z)   -\mathfrak{Im}(\psi_{(l,m)}(z_
    \textbf{k}))\mathcal{R}\funk(z_\textbf{k}) d\lambda(z)\ra.\nonumber
\end{eqnarray}
 By Proposition \ref{proposition1} the function $\mathcal{R}\funk$ is twice continuously
 differentiable. Recalling the definition of $\psi_{(l,m)}$, we observe that  the real part
\begin{equation}
    \mathfrak{Re}(\psi_{(l,m)}(s, \phi))=U_{m}(s)\cos(\phi l)
\end{equation}
 and the imaginary part
 \begin{equation}
    \mathfrak{Im}(\psi_{(l,m)}(s, \phi))=U_{m}(s)\sin(\phi l)
 \end{equation}
 are infinitely often differentiable. By Proposition \ref{proposition3} it is now easy to
 see, that all second order derivatives of these functions are uniformly bounded by $2 m^5$.
 We now  use a Taylor expansion and obtain for any $\mathbf{k}=(k_1, k_2)$
\begin{eqnarray*}
    R(\k) & :=&\int_{\GSk} \mathfrak{Re}(\psi_{(l,m)}(z))\mathcal{R}\funk(z) -
    \mathfrak{Re}(\psi_{(l,m)}(z_\textbf{k}))\mathcal{R}\funk(z_    \textbf{k})
     d\lambda(z) \\[1.2ex]
    &= & \int_{k_1/d}^{(k_1+1)/d} \int_{2\pi k_2/n}^{2\pi(k_2+1)/n} (s-z_{k_1})\frac{d}
    {ds}\lbr\mathfrak{Re}(\psi_{(l,m)})\mathcal{R}\funk \rbr(z_\textbf{k})2 \pi^{-1} \sqrt{1-
    s^2} d\phi ds\\[1.2ex]
    & + & \int_{k_1/d}^{(k_1+1)/d} \int_{2\pi k_2/n}^{2\pi(k_2+1)/n} (\phi-z_{k_2})
    \frac{d}{d\phi}\lbr\mathfrak{Re}(\psi_{(l,m)})\mathcal{R}\funk \rbr(z_\textbf{k})2 \pi^{-1}
    \sqrt{1-s^2} d\phi ds\\[1.2ex]
    & + & \int_{k_1/d}^{(k_1+1)/d} \int_{2\pi k_2/n}^{2\pi(k_2+1)/n} (s-z_{k_1})(\phi-
    z_{k_2})  \frac{d}{d\phi}\frac{d}{ds}\lbr\mathfrak{Re}(\psi_{(l,m)})\mathcal{R}\funk
    \rbr(\xi_1) 2 \pi^{-1} \sqrt{1-s^2} d\phi ds\\[1.2ex]
    & + & \int_{k_1/d}^{(k_1+1)/d} \int_{2\pi k_2/n}^{2\pi(k_2+1)/n} 2^{-1}(s-
    z_{k_1})^2\frac{d^2}{ds^2}\lbr\mathfrak{Re}(\psi_{(l,m)})\mathcal{R}\funk \rbr(\xi_2) 2
    \pi^{-1} \sqrt{1-s^2} d\phi ds\\[1.2ex]
    & + & \int_{k_1/d}^{(k_1+1)/d} \int_{2\pi k_2/n}^{2\pi(k_2+1)/n} 2^{-1}(\phi-
    z_{k_2})^2\frac{d^2}{d\phi^2}\lbr\mathfrak{Re}(\psi_{(l,m)})\mathcal{R}\funk \rbr(\xi_3) 2
    \pi^{-1} \sqrt{1-s^2} d\phi ds.
\end{eqnarray*}
 Here $\xi_1, \xi_2, \xi_3$ denote points dependent on $s$, $\phi$ and $z_\textbf{k}$,
 which are located inside $\GSk$ because of its convexity. The first two
 integrals vanish because of the choice of our design points. Moreover
 $|s-z_{k_1}|$ and $|\phi-z_{k_2}|$ are bounded by $C n^{-1/2}$ by  Assumption 1. The
 second order derivatives of $\mathcal{R}\funk $  are bounded (because they are
 continuous) and those of $\mathfrak{Re}(\psi_{(l,m)})$ are bounded by  $2 m^5$, as we
 have noted above. Thus the term $R(\k)$ is of order $\mathcal{O}(m^5n^{-1})$. Treating
 the  integrals in the sum over the imaginary parts in same fashion yields the result. \qed

\subsection{Proof of Proposition \ref{proposition5}} \label{ApB4}

 We begin by rewriting the series condition
 \eqref{octahedralcondition2} as
\begin{equation}
    1\ge \sum_{m=0}^\infty \sum_{l=-m}^m (m+1)^\smoothindex|\rp|=
     \sum_{m=0}^\infty \sum_{l=-m}^m (m+1)^{\smoothp} \la
     \lbp  \funk, \varphi_{(l,m)}
    \rbp_{\mathcal{L}^2(\mathcal{B}, \mu)} \ra,
      \label{ellipserewritten}
\end{equation}
 where we define $\smoothp:=\smoothindex-1/2$ for convenience of notation.
 The reason  for this modification is that all conditions are now  expressed directly
 by $\funk$ instead of its Radon transform. \\

 Our proof rests upon an observation found in the monograph \cite{VW1996}.
 If we can find suitable functions $g_1,...,g_L$ with finite  $\| \cdot \|_\infty$-norm,
 such that the class $\smoothel2 $ is included in the union of the
 $\|\cdot\|_\infty$-balls with radius $\epsilon$, i.e.
\begin{equation}
    \smoothel2 \subset U_\epsilon^{\| \cdot \|_\infty}(g_1) \cup ... \cup
    U_\epsilon^{\| \cdot \|_\infty}(g_L), \label{covering}
\end{equation}
then the $\|\cdot\|_\infty$-bracketing number of  $\smoothel2$ for
$2\epsilon$ is upper bounded by $L$. The corresponding  brackets are then simply given
by $[g_l-\epsilon, g_l+\epsilon]$ for all $l \in \{1,...,L\}$. We will thus confine ourselves
to showing that the covering number of $\smoothel2$ for some arbitrary but fixed
$\epsilon>0$ is upper  bounded by
$L=L(\epsilon) \le \exp ( C \epsilon^{-2/(\smoothp-\smoothpp)})$, where
$\smoothpp:=t-1/2$.\\

 The rest of the proof consists of the construction of such a class of functions,
 breaking up  $\smoothel2$ in $\epsilon$-balls and verifying that their number
 is bounded in the desired way.  We begin by relating closeness of Radon coefficients
 to closeness in $\|\cdot\|_\infty$-norm.

 Invoking Proposition \ref{proposition1}, we observe that every function
 $\funk \in  \smoothel2 $ is identical to its $\mathcal{L}^2$-expansion
\begin{equation*}
    \funk=\sum_{m = 0 }^\infty \sum_{l=-m}^m\varphi_{(l, m)}\lbp  \funk, \varphi_{(l,m)}
    \rbp_{\mathcal{L}^2(\mathcal{B}, \mu)}  .
\end{equation*}
Because of \eqref{ellipserewritten} and $\|\funk\|_\infty \le 1$
we get for each $\funk \in \smoothel2$
\begin{equation}
    \big|  \lbp  \funk, \varphi_{(l,m)}
    \rbp_{\mathcal{L}^2(\mathcal{B}, \mu)}
     \big| \le \frac{1}{ (m+1)^{\smoothp}} \quad \forall (l,m)\in \mathcal{N}. \label{cof}
\end{equation}
We will now investigate the distance between two functions $\funk$, $\tilde{g}$ in $ \smoothel2$
which have similar Radon coefficients in the  sense that
\begin{equation}
    \big| \lbp  \funk, \varphi_{(l,m)}
    \rbp_{\mathcal{L}^2(\mathcal{B}, \mu)}  -
    \left<\tilde{g}, \varphi_{(l, m)} \right>_{\mathcal{L}^2(\mathcal{B}, \mu)}
     \big| \le \frac{\epsilon}{C (m+1)^{\smoothpp}} \quad \forall (l,m)\in \mathcal{N}, \label{16}
\end{equation}
 for some $\epsilon>0$. For sufficiently large $C>0$, depending on ${\smoothpp}$ only,
 the maximal distance between $\funk$ and $\tilde{g}$ can be bounded via
\begin{eqnarray*}
    \|\funk-\tilde{g}\|_\infty & \le & \sum_{m = 0 }^\infty \sum_{l=-m}^m
     \la \left< \funk, \varphi_{(l, m)} \right>_{\mathcal{L}^2(\mathcal{B}, \mu)}
     - \left< \tilde{g}, \varphi_{(l, m)} \right>_{\mathcal{L}^2(\mathcal{B}, \mu)}
     \ra \|\varphi_{(l, m)}\|_\infty \\[1ex]
    & \le & \sum_{m = 0 }^\infty\sum_{l=-m}^m \sqrt{m+1}  \la
    \left<\funk,  \varphi_{(l, m)} \right>_{\mathcal{L}^2(\mathcal{B}, \mu)}
    - \left< \tilde{g}, \varphi_{(l, m)}\right>_{\mathcal{L}^2(\mathcal{B}, \mu)}
    \ra  \\[1ex] & \le & \frac{\epsilon}{C}  + \sum_{m = 1 }^\infty  (m+1)^{3/2}
    \frac{\epsilon}{C (m+1)^{\smoothpp}}  \le  \frac{\epsilon}{C} \Big( 1+
    \sum_{m = 1 }^\infty (m+1)^{3/2-{\smoothpp}} \Big) < \epsilon.
\end{eqnarray*}
 In the second inequality we used \eqref{varphibound} and in the last step $\smoothpp>5/2$
   in order to guarantee the convergence of the series. It is notable that the estimate \eqref{cof} already implies
   $$\big| \left<\funk,   \varphi_{(l, m)}\right>_{\mathcal{L}^2(\mathcal{B}, \mu)} -
    \left<\tilde{g},  \varphi_{(l, m)}\right>_{\mathcal{L}^2(\mathcal{B}, \mu)}
    \big| \le \frac{\epsilon}{C (m+1)^{\smoothpp}},$$
 for all $m \ge (C/\epsilon)^{1/(\smoothp-\smoothpp)}$ i.e. substantially different
 coefficients can only occur for smaller $m$.

Now let us consider those coefficients with
 $m \le \lceil (C/\epsilon)^{1/(\smoothp-\smoothpp)}\rceil$. In order to
 construct the desired functions for a covering of $\smoothel2$ as in \eqref{covering},
 we decompose the domains of possible Radon coefficients in the following way:  For each
 $(l, m) \in \mathcal{N}$, the estimate \eqref{cof} implies that
    $$ \lbp  \funk, \varphi_{(l,m)}
    \rbp_{\mathcal{L}^2(\mathcal{B}, \mu)} \in \left[-(m+1)^{-\smoothp}, (m+1)^{-\smoothp}
     \right] \times \left[-i(m+1)^{-\smoothp}, i(m+1)^{-\smoothp}  \right].$$
 We can introduce $ \lceil 4 C  (m+1)^{\smoothp-\smoothpp}/ \epsilon  \rceil^2$ grid
 points to this cube, such that any two of them have maximal distance
 $\epsilon/(C (m+1)^{\smoothpp})$. 
 The set of grid points for each cube will be called $G_{(l, m)}$.
 It then follows that for each function $g$ in $\smoothel2$ we
 can find a vector of coefficients
    $$\mathbf{a}:=\lb a_{(l,m)}\rb \in \times_{m=0}^{\lceil
    (C /\epsilon)^{1/(\smoothp-\smoothpp)} \big\rceil} \times_{l=-m}^{\bullet m}
    G_{(l,m)},$$
 ($\bullet$ denotes multiplications with those indices only where $m-l$ is even) such
 that the corresponding function
    $$  \tilde{g}= \sum_{m = 0}^{\lceil
    (C /\epsilon)^{1/(\smoothp-\smoothpp)} \big\rceil} \sum_{l=-m}^{m}
    \varphi_{(l,m)}a_{(l,m)}$$
 satisfies \eqref{16}  and hence has maximal distance $\epsilon$
 to $g$. Here the coefficients $a_{(l,m)}$ for $m-l$ odd are simply assumed to be $0$. The covering number will hence be bounded by the total number of such
 coefficients, which can be calculated as follows:
\begin{eqnarray}
 & & \label{cont} \Big| \times_{m=0}^{\lceil
    (C /\epsilon)^{1/(\smoothp-\smoothpp)} \rceil} \times_{l=-m}^{\bullet m}
     G_{(l,m)} \Big| = \Big| G_{(0,0)} \Big|\prod_{m=1}^{\lceil     (C /\epsilon)
     ^{1/(\smoothp-\smoothpp)} \rceil} \prod_{l=-m}^{\bullet m} \Big| G_{(l,m)} \Big|
     \\[ 1ex]
    & \le &\Big\lceil\frac{4C}{\epsilon }\Big\rceil^2 \prod_{m=1}
    ^{  \lceil \lb  C /\epsilon\rb^{1/(\smoothp-\smoothpp)} \rceil} \Big\lceil
    \lb \frac{4 C  m^{\smoothpp-\smoothp} }{\epsilon}\rb \Big\rceil^{2(m+1)} \nonumber
    \le  \Big\lceil\frac{4C}{\epsilon }\Big\rceil^2 \prod_{m=1}^{  \lceil \lb  C
    /\epsilon\rb^{{1/(\smoothp-\smoothpp)}} \rceil} \Big\lceil \lb \frac{4 C   }
    {\epsilon}\rb \Big\rceil^{2(m+1)}   \\[1ex]
    & = & \Big\lceil \frac{C}{\epsilon}\Big\rceil^{2\sum_{m=1}^{\lceil \lb
    C /\epsilon\rb^{1/(\smoothp-\smoothpp)} \rceil+1}m} \nonumber \le \Big\lceil
     \frac{C }{\epsilon}\Big\rceil^{8 \lb  C /\epsilon\rb^{2/(\smoothp-\smoothpp)}}
    \le \exp \Big\{ \log\Big( \frac{C}{\epsilon}\Big) \Big(  \frac{C}{\epsilon}\Big)^{2/(\tilde{\tau}-
     \tilde{t})} \Big\}.     \nonumber
\end{eqnarray}
 To achieve the desired rate we repeat our above argumentation
 for a shrunk version of $t$, say $t-\delta$ which is still larger than $3$, i.e. with
 $\tilde{t}-\delta$ still larger than $5/2$. For sufficiently small $\epsilon>0$ it follows that
    $$\exp \Big\{ \log\Big( \frac{C}{\epsilon}\Big) \Big(  \frac{C}{\epsilon}\Big)^{2/(\tilde{\tau}
    -\tilde{t}+\delta)} \Big\} \le \exp \Big\{ \Big(  \frac{C}{\epsilon}\Big)^{2/(\tilde{\tau}-
    \tilde{t})} \Big\}.$$

 By our auxiliary considerations the bracketing number is thus bounded in the desired way.\qed

\end{document}